\newtheorem{thm}{Theorem}[section]
\newtheorem{prop}[thm]{Proposition}
\newtheorem{defn}[thm]{Definition}
\newtheorem{exam}[thm]{Example}
\numberwithin{equation}{section}
\begin{document}


\title[Few remarks on evolution algebras]
{Few remarks on evolution algebras}%
\author{Abror Kh. Khudoyberdiyev,\  Bakhrom A. Omirov, \ Izzat Qaralleh}

\address{[A.\ Kh.\ Khudoyberdiyev and B.\ A.\ Omirov] Institute of Mathematics, National University of Uzbekistan,
Tashkent, 100125, Uzbekistan.} \email{khabror@mail.ru, omirovb@mail.ru}\

\address{[Izzat Qaralleh] I
Department of Computational \& Theoretical Sciences, Faculty of
Science, International Islamic University Malaysia, P.O. Box, 141,
25710, Kuantan, Pahang, Malaysia} \email{izzat\_math@yahoo.com}\

%

\begin{abstract}
In the present paper we study some algebraic properties of evolution
algebras. Moreover, we reduce the study of evolution algebras of
permutations to two special types of evolution algebras, idempotents
and absolute nilpotent elements of the algebra. We study
three-dimensional evolution algebras whose each element of evolution
basis has infinite period. In addition, for an evolution algebra
with some properties we describe its associative enveloping
algebra.\\[2mm]
\textit{Mathematics Subject Classification 2010}: 17D92, 17D99.\\
\textit{Key Words and Phrases}: evolution algebra, algebra of
permutations, absolute nilpotent element, idempotent, algebra of
multiplications, associative enveloping algebra.
\end{abstract}

\maketitle
\section{Introduction.}

In 20s and 30s of the last century the new object was introduced to mathematics, which was the product of
interactions between Mendelian genetics and mathematics. One of the first scientist who give an algebraic
interpretation of the $"\times"$ sign, which indicated sexual reproduction was Serebrowsky.

It is known that there exists an intrinsic and general mathematical structure behind the neutral Wright-Fisher models in population genetics, the reproduction of bacteria involved by bacteriophages, asexual reproduction or generally non-Mendelian inheritance and Markov chains. In \cite{Tian} a new type of algebras was
associated with it -- the evolution algebras.

Although an evolution algebra is an abstract system, it gives an insight for the study of non-Mendelian genetics. For instance, an evolution algebra can be applied to the inheritance of organelle genes, one can predict, in particular, all possible mechanisms to establish the homoplasmy of cell populations.

The general genetic algebras  developed into a field of independent
mathematical interest, because these algebras are in general
non-associative and do not belong to any of the well-known classes
of non-associative algebras such as Lie algebras, alternative
algebras, or Jordan algebras.

Until 1980s, the most comprehensive reference in this area was
W\"orz-Busekros's book \cite{Busekros}. More recent results, such as
genetic evolution in genetic algebras, can be found in Lyubich's
book \cite{ly}. A good survey is Reed's article \cite{Reed}. In
Tian's book \cite{Tian} a foundation of the framework of the theory
of evolution algebras is established and some applications of
evolution algebras in the theory of stochastic processes and
genetics are discussed. Recently, Rozikov and Tian \cite{Rozikov}
studied algebraic structures evolution algebras associated with
Gibbs measures defined on some graphs. In \cite{Camacho2},
\cite{Casas3}, \cite{Ladra} derivations, some properties of chain of
evolution algebras and dibaricity of evolution algebras were
studied. In \cite{Camacho1}, \cite{Casas1}, \cite{Casas2} certain
algebraic properties of evolution algebras (like right nilpotency,
nilpotency and solvability etc.) in terms of matrix of structural
constants have been investigated. In fact, nilpotency, right
nilpotency and solvability might be interpreted in a biological way
as a various types of vanishing ("deaths") populations.

The present paper is organized as follows: In Section 2 we give some
definitions and preliminary results. In Section 3 we reduce the
study of arbitrary evolution algebra of permutations into two
special evolution algebras. Section 4 is devoted to the description
of $n$-dimensional associative enveloping algebras of
$n$-dimensional evolution algebras with some restrictions on $rank$
of the matrix $A$ of structural constants. Moreover, associative
enveloping algebras for $2$-dimensional evolution algebras are
described, as well. In Section 5 we establish some properties of
three-dimensional evolution algebras whose each basis element has
infinite period.

Throughout the paper we consider finite-dimensional evolution
algebras over a field of zero characteristic. Moreover, in the
multiplication table of an evolution algebra the omitted products
are assumed to be zero.

\section{Preliminaries.}

Let us define the main object of this work - evolution algebra.

\begin{defn} \cite{Tian} Let $(E, \cdot)$ be an algebra over a field $F.$ If it admits a
basis $\{e_1, e_2, \dots\}$ such that $$e_i \cdot e_j = 0, \quad for \
i \neq j,\qquad  e_i \cdot e_i = \sum\limits_ka_{i,k}e_k, \quad
for  \ any \ i,$$ then this algebra is called evolution algebra.
\end{defn}
It is remarkable that this type of algebra depends on evolution basis $\{e_1, e_2, \dots\}.$

In the following theorem we present the list (up to isomorphism) of 2-dimensional complex evolution algebras.
\begin{thm} \cite{Casas2} \label{thm22}
Any  2-dimensional non abelian complex evolution algebra $E$ is isomorphic to
one of the following pairwise non isomorphic algebras:
\begin{enumerate}
\item $\dim E^2=1$
\begin{itemize}
\item $E_1:\ \ e_{1}e_{1} = e_{1}$,

\item $E_2: \ \ e_{1}e_{1} = e_{1}, \ \ e_{2}e_{2}= e_{1}$,

\item $E_3: \ \ e_{1}e_{1} = e_{1} + e_{2}, \ \  e_{2}e_{2}= -e_{1}-
e_{2}$,

\item $E_4: \ \ e_{1}e_{1} = e_{2}$.
\end{itemize}
\item $\dim E^{2}=2$
\begin{itemize}

\item $E_5: \ \  e_{1}e_{1}=e_{1}+a_{2}e_{2}, \ \
e_{2}e_{2}=a_{3}e_{1}+e_{2}, \ \ 1 - a_{2}a_{3}\ne 0$, where
$E_5(a_{2},a_{3})\cong E_5'(a_{3},a_{2})$,

\item $E_6: \ \ e_{1}e_{1}=e_{2}, \ \ e_{2}e_{2}=e_{1}+a_{4}e_{2}$,
where for $a_4\ne 0$,  $E_6(a_{4})\cong E_6(a'_{4}) \
\Leftrightarrow \ \frac{a'_4}{a_4}=\cos\frac{2\pi k}{3} + i
\sin\frac{2\pi k}{3} \ \mbox{for some} \ k=0, 1, 2$.
\end{itemize}
\end{enumerate}
\end{thm}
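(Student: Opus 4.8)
\emph{Proof strategy.}
Fix an evolution basis $\{e_1,e_2\}$ and write $e_1^2=a_{1,1}e_1+a_{1,2}e_2$, $e_2^2=a_{2,1}e_1+a_{2,2}e_2$; the algebra is encoded by the matrix $A=(a_{i,j})$, it is non-abelian exactly when $A\neq 0$, and $\dim E^2=\operatorname{rank}A\in\{1,2\}$. The mechanism behind the classification is the following: for $e_1'=\alpha e_1+\beta e_2$, $e_2'=\gamma e_1+\delta e_2$ with $\alpha\delta-\beta\gamma\neq 0$, the relations $e_1e_2=0$ give $e_1'e_2'=\alpha\gamma\,e_1^2+\beta\delta\,e_2^2$, so $\{e_1',e_2'\}$ is again an evolution basis if and only if $\alpha\gamma\,e_1^2+\beta\delta\,e_2^2=0$. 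The plan is to split on $\operatorname{rank}A$, normalize $A$ using such admissible (i.e.\ evolution-structure-preserving) transformations, each of which replaces $E$ by an isomorphic algebra, and finally to check that the six resulting algebras are pairwise non-isomorphic.

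\textbf{Rank $2$.} Here $e_1^2,e_2^2$ are linearly independent, so the displayed condition forces $\alpha\gamma=\beta\delta=0$; combined with invertibility, the admissible transformations are precisely the rescalings $e_i\mapsto\lambda_i e_i$ and the flip $e_1\leftrightarrow e_2$. A rescaling changes $A$ by $a_{1,1}\mapsto\lambda_1 a_{1,1}$, $a_{2,2}\mapsto\lambda_2 a_{2,2}$, $a_{1,2}\mapsto(\lambda_1^2/\lambda_2)a_{1,2}$, $a_{2,1}\mapsto(\lambda_2^2/\lambda_1)a_{2,1}$, while the flip interchanges $a_{1,1}\leftrightarrow a_{2,2}$ and $a_{1,2}\leftrightarrow a_{2,1}$; since $A$ is invertible it has no zero row. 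If $a_{1,1}a_{2,2}\neq 0$, rescale both diagonal entries to $1$, reaching $E_5(a_2,a_3)$ with $a_2a_3=a_{1,2}a_{2,1}/(a_{1,1}a_{2,2})$; as the determinant only changes by a nonzero scalar, $\det A\neq 0$ is equivalent to $1-a_2a_3\neq 0$, and the only leftover move, the flip, gives $E_5(a_2,a_3)\cong E_5(a_3,a_2)$. Otherwise some diagonal entry vanishes; after a flip assume $a_{1,1}=0$, so invertibility forces $a_{1,2}\neq 0$ and $a_{2,1}\neq 0$. Imposing $a_{1,2}=1$ fixes $\lambda_2=a_{1,2}\lambda_1^2$, and then $a_{2,1}=1$ fixes $\lambda_1^3=(a_{1,2}^2a_{2,1})^{-1}$, i.e.\ $\lambda_1$ up to a cube root of unity; hence the surviving parameter $a_{2,2}=a_4$ is determined only up to a cube root of unity, which is exactly the equivalence stated for $E_6(a_4)$ (with $a_4=0$ covering the case where both diagonal entries vanish). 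Moreover the same computation shows that \emph{every} evolution basis of a rank-$2$ evolution algebra is a rescaling, possibly composed with a flip, of the given one; consequently every algebra isomorphism between rank-$2$ evolution algebras is induced by an admissible transformation, so the identifications just found are the only ones inside the families $E_5$ and $E_6$.

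\textbf{Rank $1$.} Now $E^2=\langle v\rangle$ is a one-dimensional ideal and the admissibility condition becomes a single scalar equation, leaving enough freedom to finish with finitely many algebras. Since $v^2\in E^2$ and the squaring map on the line $E^2$ is a scalar multiple of a perfect square, either $v^2=0$ or a rescaling of $v$ makes it an idempotent, $v^2=v$. In the idempotent case $v$ extends to an evolution basis $\{v,w\}$ (a complement with $vw=0$ exists because the relevant scalar equation has a solution transverse to $\langle v\rangle$), and rescaling $w$ brings $w^2$ to $0$ or to $v$, giving $E_1$ or $E_2$. If $v^2=0$: when one of $e_1^2,e_2^2$ vanishes then (after a flip) $e_2^2=0$ and $e_1^2\in\langle e_2\rangle$, which normalizes to $E_4$; otherwise both $e_1^2,e_2^2$ are nonzero multiples of $v$, and taking $v=e_1^2$ the relation $v^2=0$ pins $e_2^2$ down to a specific multiple of $e_1^2$, after which a rescaling produces $e_1^2=e_1+e_2$, $e_2^2=-e_1-e_2$, i.e.\ $E_3$ (whose $E^2$-generator is indeed nilpotent, since $(e_1+e_2)^2=e_1^2+e_2^2=0$).

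\textbf{Non-isomorphism.} It remains to see no two of the six are isomorphic. The invariant $\dim E^2$ separates $\{E_1,E_2,E_3,E_4\}$ from $\{E_5,E_6\}$. Inside the first group the set $\mathcal Z(E)=\{x\in E:x^2=0\}$ is a single line for $E_1$ and $E_4$ but a union of two distinct lines for $E_2$ and $E_3$, while $(E^2)^2\neq 0$ for $E_1,E_2$ and $(E^2)^2=0$ for $E_3,E_4$; these two invariants together separate all four. For $E_5$ versus $E_6$: in $E_6$ the nonzero element $e_1$ satisfies $e_1^2\neq 0$ but $e_1\cdot e_1^2=0$, whereas in $E_5$ the equation $x\cdot x^2=0$ forces $x=0$ (a short computation using $1-a_2a_3\neq 0$). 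The genuinely delicate point of the whole argument is the rank-$2$ analysis --- first that linear independence of $e_1^2,e_2^2$ collapses all evolution-structure-preserving changes of basis to rescalings and one flip, and then the bookkeeping showing that $a_{1,2}=a_{2,1}=1$ can be reached only up to a cube root of unity, which is the source of the cyclic identification of the $E_6(a_4)$; the rank-$1$ normalizations are routine once the idempotent/nilpotent dichotomy for the generator of $E^2$ is isolated.
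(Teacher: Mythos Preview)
The paper does not supply a proof of this theorem; it is quoted from \cite{Casas2} as a known classification, with no argument given in the present paper, so there is no ``paper's proof'' to compare your attempt against.

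Your proof is correct and follows the standard route for this kind of classification. The decisive structural observation --- that when $e_1^2,e_2^2$ are linearly independent the condition $\alpha\gamma\,e_1^2+\beta\delta\,e_2^2=0$ forces every evolution-basis change to be a diagonal rescaling, possibly composed with the swap $e_1\leftrightarrow e_2$ --- is exactly what makes $(a_2,a_3)$ (up to swap) and $a_4$ (up to a cube root of unity) genuine moduli, and you carry out the bookkeeping cleanly. In the rank-$1$ case, the idempotent/nilpotent dichotomy on a generator $v$ of $E^2$, together with your transversality remark that $v$ lies in the hyperplane $\{w:vw=0\}$ precisely when $v^2=0$, is the efficient way to reach $E_1$--$E_4$. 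The invariants you invoke at the end ($\dim E^2$, the absolute-nilpotent locus $\mathcal Z(E)$, $(E^2)^2$, and the existence of a nonzero $x$ with $x\cdot x^2=0$) are all isomorphism invariants and suffice to separate the six families; your computation that $x\cdot x^2=0$ forces $x=0$ in $E_5$ uses $1-a_2a_3\neq 0$ in exactly the right place. One cosmetic point: in the rank-$1$, $v^2=0$, both-$e_i^2$-nonzero subcase, it is worth recording explicitly that $a_{1,1}$ and $a_{1,2}$ are automatically nonzero (otherwise either $v^2\neq 0$ or one of the $e_i^2$ vanishes), so that the rescaling to $e_1^2=e_1+e_2$ is legitimate.
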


Further we shall show the role of idempotents and absolute nilpotent elements of an evolution algebra.

\begin{defn} An element $x$ of an evolution algebra $E$ is called idempotent, if $xx=x$.
An element $y$ of an evolution algebra $E$ is called absolute
nilpotent if $yy=0$.
\end{defn}

Consider a complex evolution algebra $E_{n,\pi}(a_1, a_2, \dots,
a_n)$ with a basis $\{e_1, e_2, \dots ,e_n\}$ and the table of multiplications given
by $$\left\{\begin{array}{ll}e_i\cdot e_i = a_ie_{\pi(i)}, &\ 1\leq i \leq n,\\[1mm]
e_i\cdot e_j = 0, & \ i \neq j,
\end{array}\right.$$
where $\pi$ is an element of the group of permutations $S_n$.

An evolution algebra $E_{n,\pi}(a_1, a_2, \dots,
a_n)$ is said to be {\it evolution algebra of permutations}.

In what follows, by a {\it cycle permutation} we mean a permutation
in which a part of symbols $\{l_1, l_2, \dots, l_t\}\subseteq \{1,
2, \dots, n\}$ are cyclic permutated and the rest ones are
stationary, i.e., $l_1 \rightarrow  l_2  \rightarrow \dots
\rightarrow  l_t \rightarrow  l_1$, and we denote $\pi=(l_1, l_2,
\dots, l_t).$

It is known that any permutation is up to order uniquely decomposed into product of independent cycles.

For permutations of the form $\pi = (l_1, l_2, \dots, l_r)(m_1, m_2, \dots, m_s) \dots (p_1, p_2, \dots, p_t)$ it is known the following result.
\begin{prop} Two permutations are conjugated in $S_n$ if and only if the corresponding sets
$\{r, s, \dots, t\}$ are coincided.
\end{prop}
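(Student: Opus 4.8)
The plan is to establish the classical description of conjugacy classes in $S_n$, the engine of which is the identity describing how conjugation acts on a single cycle. First I would record this computational lemma: for any $\sigma \in S_n$ and any cycle $(l_1, l_2, \dots, l_r)$ one has
$$\sigma\,(l_1, l_2, \dots, l_r)\,\sigma^{-1} = (\sigma(l_1), \sigma(l_2), \dots, \sigma(l_r)).$$
This is checked directly by evaluating both sides on an arbitrary $k \in \{1, \dots, n\}$: if $k = \sigma(l_i)$ for some $i$, then $\sigma^{-1}(k)=l_i$, the inner cycle sends $l_i$ to $l_{i+1}$ (indices read modulo $r$), and $\sigma$ then produces $\sigma(l_{i+1})$, which is exactly where the right-hand side sends $k$; if $k$ is not of this form, then $\sigma^{-1}(k)$ lies outside $\{l_1,\dots,l_r\}$, so the left-hand side fixes $k$, as does the right-hand side.

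For the forward implication, suppose $\pi' = \sigma \pi \sigma^{-1}$ and write $\pi = c_1 c_2 \cdots c_m$ as a product of independent cycles whose lengths form the set $\{r, s, \dots, t\}$. Since conjugation by $\sigma$ is a group automorphism, $\pi' = (\sigma c_1 \sigma^{-1})(\sigma c_2 \sigma^{-1})\cdots(\sigma c_m \sigma^{-1})$, and by the lemma each $\sigma c_i \sigma^{-1}$ is a cycle of the same length as $c_i$. These conjugated cycles remain pairwise independent because $\sigma$ is a bijection and the supports of the $c_i$ are disjoint, so the supports of the $\sigma c_i \sigma^{-1}$ are disjoint as well. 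Hence this product is the cycle decomposition of $\pi'$, and its collection of cycle lengths is again $\{r, s, \dots, t\}$.

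For the converse, suppose $\pi$ and $\pi'$ have the same collection of cycle lengths. Write
$$\pi = (l_1, \dots, l_r)(m_1, \dots, m_s)\cdots(p_1, \dots, p_t), \qquad \pi' = (l_1', \dots, l_r')(m_1', \dots, m_s')\cdots(p_1', \dots, p_t'),$$
where fixed points are included as length-one cycles so that every symbol of $\{1, 2, \dots, n\}$ appears exactly once in each decomposition, and where the cycles are ordered so that the $i$-th cycle of $\pi$ and the $i$-th cycle of $\pi'$ have equal length. I would then define $\sigma \in S_n$ to send the $j$-th entry of the $i$-th cycle of $\pi$ to the $j$-th entry of the $i$-th cycle of $\pi'$; this is a well-defined bijection of $\{1, \dots, n\}$ precisely because each of the two decompositions lists every element exactly once. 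Applying the lemma cycle by cycle yields $\sigma \pi \sigma^{-1} = \pi'$, so $\pi$ and $\pi'$ are conjugate.

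I expect no genuine obstacle in this argument; the only place demanding care is the bookkeeping in the converse, namely the insistence that the cycle decompositions be written with all fixed points displayed, which is what makes $\sigma$ defined on all of $\{1, \dots, n\}$ and manifestly bijective. (One should also read the set $\{r, s, \dots, t\}$ as a multiset, since a permutation may have several cycles of the same length; this does not affect any step of the proof.)
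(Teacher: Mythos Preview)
Your argument is correct and is the standard textbook proof of this classical fact: the key lemma $\sigma(l_1,\dots,l_r)\sigma^{-1}=(\sigma(l_1),\dots,\sigma(l_r))$ immediately gives the forward direction, and the explicit construction of $\sigma$ from matched cycle decompositions (with fixed points displayed) gives the converse. Your remark that $\{r,s,\dots,t\}$ must be read as a multiset is also well taken.

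There is nothing to compare against, however: the paper does not prove this proposition at all. It is introduced with the phrase ``it is known the following result'' and is simply quoted as background from the theory of symmetric groups, to be used later when establishing that conjugate permutations yield isomorphic evolution algebras of permutations. So your proof is not an alternative to the paper's---it is a proof where the paper offers none.
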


\begin{defn} \cite{Tian} An evolution algebra $E$ with a table of multiplications $$e_i \cdot e_i =
\sum\limits_ka_{i,k}e_k, \quad a_ie_j=0, \ i\neq j$$ is called
Markov evolution algebra if $\sum\limits_ka_{i,k}=1.$
\end{defn}

For a given element $x$ of an evolution algebra $E$ we consider the
right multiplication operator $R_x \ : \ E \rightarrow E$ defined by
$R_x(y)=yx, \ y\in E.$

Note that operators of right and left multiplications are coincided,
since evolution algebras are commutative.

For an evolution algebra $E$, by $M(E)$ we denote an {\it
associative enveloping algebra} which is generated by the set
$R(E)=\{R_x \ | \ x\in E\}$. It is clear that $M(E)$ is a
subalgebra of $End(E).$

For an element $x \in E$ we define plenary powers as follows:
$$x^{[1]}=x, \quad x^{[k+1]} = x^{[k]} \cdot x^{[k]}, \quad k \geq 1.$$

\begin{defn} Let $e_j$ be a generator of an evolution algebra $E,$ the period $d$ of $e_j$ is defined
to be the greatest common divisor of the set $\{\log_2m \ | \ e_j < e_j^{[m]}\}.$ That is
$$d = g.c.d. \{\log_2m \ | \ e_j < e_j^{[m]}\}.
$$
\end{defn}


\section{Evolution algebra of permutations}

Let us first present two important examples of evolution algebra of
permutations.

\begin{exam} Consider the following evolution algebra:
$$E_{n}: \quad  \left\{\begin{array}{ll}e_i\cdot e_i = e_{i+1}, & 1 \leq i \leq n-1,\\[1mm]
e_n\cdot e_n = e_{1}, & \\[1mm]
e_i\cdot e_j = 0, & \ i \neq j.
\end{array}\right.$$
Evidently, the algebra $E_{n}$ is evolution algebra of permutations of the form $E_{n,\pi}(1, 1, \dots,1),$ with $\pi= (1, 2, 3, \dots, n).$
\end{exam}

\begin{exam} Evolution algebra defined as follows:
$$EN_{n}: \quad \left\{\begin{array}{ll}e_i\cdot e_i = e_{i+1}, & 1 \leq i \leq n-1,\\[1mm]
e_n\cdot e_n = 0, & \\[1mm]
e_i\cdot e_j = 0, & i \neq j,
\end{array}\right.$$
is the algebra of permutations of the form $E_{n,\pi}(1, 1, \dots,1, 0)$
with $\pi= (1, 2, 3, \dots, n).$
\end{exam}

Note that $E_n,$ $EN_n$ are single-generated simple and nilpotent
evolution algebras, respectively. Moreover, algebras $E_1,$ $EN_1$
define one-dimensional evolution algebras, whose basis elements
are idempotent and absolute nilpotent elements, respectively.

Now we shall consider some properties of evolution algebra of permutations.

\begin{prop} \label{prop33} Let $E_{n,\pi}(a_1, a_2, \dots, a_n)$ be an evolution algebra of permutations with the following conditions:

(i) $a_i\neq 0$ for all $i \ (1\leq i \leq n),$

(ii) $\pi=\pi_1 \circ\pi_2 \circ\dots \circ\pi_r,$ where $\pi_1 = (l_1, l_2, \dots, l_{k_1}), \ \ \pi_2 = (m_1, m_2, \dots,
m_{k_2}), \ \ \dots, \ \ \pi_r = (p_1, p_2, \dots, p_{k_r})$ are independent cycles and $k_1+k_2+\dots+k_r=n.$

Then $$E_{n,\pi}(a_1, a_2, \dots, a_n)\cong E_{k_1,\pi_1}(b_1, b_2, \dots, b_{k_1})\oplus E_{k_2,\pi_2}(c_1, c_2, \dots, c_{k_2})\oplus \dots \oplus E_{k_r, \pi_{r}}(d_1, d_2, \dots, d_{k_r}).$$
\end{prop}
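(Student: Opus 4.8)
The plan is to construct an explicit change of evolution basis that simultaneously "rescales away" the structure constants inside each cycle and respects the block decomposition coming from the independent cycles $\pi_1, \dots, \pi_r$. Since the supports $\{l_1, \dots, l_{k_1}\}, \{m_1, \dots, m_{k_2}\}, \dots, \{p_1, \dots, p_{k_r}\}$ of the cycles are pairwise disjoint and cover $\{1, 2, \dots, n\}$, the subspaces they span are subalgebras (because $e_i \cdot e_i = a_i e_{\pi(i)}$ stays within the same cycle's support, and cross products vanish), and $E_{n,\pi}$ is visibly their direct sum of ideals. So the first step is just this observation: $E_{n,\pi}(a_1, \dots, a_n) \cong \bigoplus_{j=1}^r \langle e_i : i \in \mathrm{supp}(\pi_j)\rangle$, and each summand, after relabelling, is an evolution algebra of permutations $E_{k_j, \pi_j}(\dots)$ with $\pi_j$ a single cycle and all structure constants nonzero by (i).

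The second step is the heart of the matter: for a single cycle, say $\pi_1 = (l_1, l_2, \dots, l_{k_1})$ with $e_{l_t} \cdot e_{l_t} = a_{l_t} e_{l_{t+1}}$ (indices mod $k_1$), I want to rescale the basis vectors, setting $f_{l_t} = \lambda_{l_t} e_{l_t}$, so that $f_{l_t} \cdot f_{l_t} = \lambda_{l_t}^2 a_{l_t} \lambda_{l_{t+1}}^{-1} f_{l_{t+1}}$. I need to choose the $\lambda$'s so that all but possibly one of the new coefficients $\lambda_{l_t}^2 a_{l_t} \lambda_{l_{t+1}}^{-1}$ equal $1$. Fixing $\lambda_{l_1} = 1$ and solving the recursion $\lambda_{l_{t+1}} = \lambda_{l_t}^2 a_{l_t}$ forward for $t = 1, \dots, k_1 - 1$ forces the first $k_1 - 1$ coefficients to be $1$; the remaining coefficient $b := \lambda_{l_{k_1}}^2 a_{l_{k_1}} \lambda_{l_1}^{-1}$ is then a fixed nonzero scalar (an explicit monomial in the $a$'s), and one checks $b \neq 0$. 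If $b$ admits a square root realized by further rescaling — note that over an arbitrary field of characteristic zero we cannot always extract roots, so the cleanest statement is that each cycle block is isomorphic to $E_{k_1, \pi_1}(1, 1, \dots, 1, b)$ for a suitable $b$, and one may then absorb as much of $b$ as the field allows; over $\mathbb{C}$ one gets $E_{k_1}$ exactly. I would phrase the target coefficients as $(b_1, \dots, b_{k_1})$ to keep the statement field-agnostic, which matches how the proposition is written.

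The third step is bookkeeping: assemble the per-block rescalings into a single block-diagonal linear map on $E_{n,\pi}$, verify it is an algebra isomorphism (automatic, since it is a direct sum of the isomorphisms from step two and the cross products are zero on both sides), and conclude the displayed decomposition. I expect the main obstacle to be purely notational — carrying the cyclic indexing within each $\pi_j$ cleanly while keeping the global labels straight — together with the minor subtlety of how much of the leftover coefficient $b$ in each cycle can be normalized away, which depends on the ground field; stating the result with general target coefficients $b_i, c_i, \dots, d_i$ (as the proposition does) sidesteps this. No deep tool is needed beyond the elementary fact quoted in the excerpt that a permutation decomposes uniquely into independent cycles.
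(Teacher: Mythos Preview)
Your step~1 is exactly the paper's entire proof: the paper simply relabels $e_{i,1}=e_{l_i}$, $e_{i,2}=e_{m_i}$, \dots, $e_{i,r}=e_{p_i}$, observes that each block is an evolution algebra of permutations for the corresponding cycle with nonzero structure constants, and stops. Because the proposition leaves the target coefficients $b_i, c_j, \dots, d_s$ completely unspecified (you note this yourself at the end), no rescaling is needed at all --- the $b$'s, $c$'s, $d$'s are just the original $a$'s reindexed.

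Your step~2 is therefore unnecessary here, but it is not wasted work: it is precisely the content of the paper's subsequent Proposition~\ref{prop34} and Theorem~\ref{thm35}, which carry out the single-cycle normalization you describe (over $\mathbb{C}$, so the $(2^n-1)$-th root needed to kill the leftover coefficient $b$ exists). So your proposal is correct but proves strictly more than the stated proposition; trim it to step~1 and you match the paper exactly.
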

\begin{proof}

The isomorphism is provided by the following change of basis:

$$e_{i,1}=e_{l_i}, \ 1\leq i \leq k_1, \quad e_{i,2}=e_{m_i}, \ 1\leq i \leq k_2, \quad \dots, \quad e_{i,r}=e_{p_i}, \ 1\leq i \leq k_r.$$

Thus, we have the evolution algebra $E_{k_s,\pi_s}(*, *, \dots, *)$ with the basis $e_{i,s}, \ 1\leq i \leq k_s, \ 1\leq s \leq r$  and $$E_{n,\pi}(a_1, a_2, \dots, a_n)\cong E_{k_1,\pi_1}(b_1, b_2, \dots, b_{k_1})\oplus E_{k_2,\pi_2}(c_1, c_2, \dots, c_{k_2})\oplus \dots \oplus E_{k_r, \pi_{r}}(d_1, d_2, \dots, d_{k_r})$$ for some non-zero  values of $b_i, c_j, \dots, d_s.$
\end{proof}

In the following proposition we specify more details on the terms of direct sum in the statement of Proposition \ref{prop33}.

\begin{prop}\label{prop34}
Any evolution algebra of permutation $E_{n,\tau}(a_1, a_2, \dots, a_n)$ with $\tau=(l_1, l_2, \dots, l_n)$ and condition $a_i\neq 0$ for all $i \ (1\leq i \leq n)$ is isomorphic to the algebra $E_{n,\pi}(a_1, a_{\pi(1)}, \dots, a_{\pi^{n-1}(1)})$ with $\pi=(1, 2, \dots, n).$
\end{prop}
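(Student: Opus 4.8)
The plan is to relabel the basis so that the single cycle $\tau=(l_1,l_2,\dots,l_n)$ becomes the standard cycle $\pi=(1,2,\dots,n)$, and then to track exactly which structure constant sits on which new basis vector. Concretely, after renaming $l_i\mapsto i$ the cycle $\tau$ carries $l_i\to l_{i+1}$ (indices mod $n$), so in the new labels it becomes $i\to i+1$, i.e. $\pi=(1,2,\dots,n)$. Under this relabeling the generator originally called $e_{l_i}$ becomes the $i$-th new generator, and since $e_{l_i}\cdot e_{l_i}=a_{l_i}e_{l_{i+1}}$, the new multiplication table reads (new $e_i$)$\cdot$(new $e_i$)$=a_{l_i}\,$(new $e_{i+1}$). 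Thus $E_{n,\tau}(a_1,\dots,a_n)\cong E_{n,\pi}(a_{l_1},a_{l_2},\dots,a_{l_n})$, and the content of the proposition is really the bookkeeping claim that, up to a cyclic rotation of the list, $(a_{l_1},\dots,a_{l_n})=(a_1,a_{\pi(1)},\dots,a_{\pi^{n-1}(1)})$ once we also normalize so that the first new generator corresponds to the index $1$.

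First I would make the relabeling precise: define the permutation $\sigma\in S_n$ by $\sigma(l_i)=i$ for $i=1,\dots,n$ (so $\sigma$ sends the support of $\tau$, written in cyclic order, to $1,2,\dots,n$ in order), set $f_i=e_{\sigma^{-1}(i)}=e_{l_i}$, and check that $f_i\cdot f_i=a_{l_i}f_{i+1}$ and $f_i\cdot f_j=0$ for $i\ne j$, with indices read modulo $n$; this already gives an isomorphism onto $E_{n,\pi}(a_{l_1},\dots,a_{l_n})$. Next, I would observe that $\pi=(1,2,\dots,n)$ satisfies $\pi^{k}(1)=1+k$, so the list $(a_1,a_{\pi(1)},a_{\pi^2(1)},\dots,a_{\pi^{n-1}(1)})$ is just $(a_1,a_2,\dots,a_n)$ \emph{read starting from position $1$ and cycling}. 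The only remaining point is to reconcile the starting index: the element $1$ appears somewhere in the cyclic word $(l_1,l_2,\dots,l_n)$, say $l_j=1$; since the cycle may be written beginning at any of its entries, I rewrite $\tau$ as $(l_j,l_{j+1},\dots,l_{j-1})=(1,l_{j+1},\dots,l_{j-1})$, which does not change $\tau$ as a permutation and hence does not change the algebra. Applying the relabeling above to this representative makes $f_1=e_1$, and the resulting list of structure constants becomes $(a_1,a_{l_{j+1}},\dots,a_{l_{j-1}})$, which by $l_{m+1}=\tau(l_m)=\tau^{m-j}(1)$ (for the rewritten cycle) is exactly $(a_1,a_{\tau(1)},a_{\tau^2(1)},\dots,a_{\tau^{n-1}(1)})$. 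Finally, since $\tau$ and $\pi$ are conjugate single $n$-cycles, one can further replace $\tau$ by $\pi$ inside the subscripts — or, more cleanly, note that the whole argument was symmetric and one may simply take $\tau=\pi$ from the start after the relabeling, giving $E_{n,\tau}(a_1,\dots,a_n)\cong E_{n,\pi}(a_1,a_{\pi(1)},\dots,a_{\pi^{n-1}(1)})$.

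The steps are all elementary once the indexing convention is fixed; I expect the main (purely notational) obstacle to be managing the two layers of reindexing at once — the relabeling $\sigma$ that turns $\tau$ into the standard cycle, and the cyclic rotation that aligns the starting symbol with $1$ — without the modular-arithmetic subscripts becoming ambiguous. A safe way to avoid errors is to verify the claim on a small explicit case (say $n=3$, $\tau=(1\,3\,2)$) to fix the direction of all the index shifts, and then write the general argument so that every product $f_i\cdot f_i$ is checked directly against the definition of $E_{n,\pi}$. No deep result is needed; I would only invoke the fact, already recorded in the excerpt, that a cycle may be written starting from any of its symbols, together with the definition of an evolution algebra of permutations.
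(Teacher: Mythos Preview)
Your approach is correct and is essentially the same as the paper's: the paper simply writes down the single basis permutation $e'_1=e_1,\ e'_i=e_{\tau^{i-1}(1)}$ (the paper writes $\pi$ here, which is a notational slip), i.e.\ it follows the orbit of $1$ under the $n$-cycle in one step rather than first relabeling via $\sigma$ and then cyclically rotating to start at $1$. Your extra bookkeeping about the $\tau$ versus $\pi$ subscripts is justified---the paper's statement and one-line proof are loose on exactly this point---but the underlying idea is identical.
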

\begin{proof}

The isomorphism is established by basis permutation:
$$e'_1=e_1, \quad e'_i=e_{\pi^{i-1}(1)}, \ 2 \leq  i \leq n.$$
\end{proof}

\begin{thm}\label{thm35}
Any evolution algebra of permutation $E_{n,\tau}(a_1, a_2, \dots, a_n)$ with $\tau=(l_1, l_2, \dots, l_n)$ and condition $a_i\neq 0$ for all $i \ (1\leq i \leq n)$ is isomorphic to the algebra $E_n.$
\end{thm}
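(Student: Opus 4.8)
The plan is to reduce the general single-cycle case to the normalized form already provided by Proposition~\ref{prop34}, and then to eliminate the remaining structure constants by a diagonal change of basis. By Proposition~\ref{prop34}, after relabeling the basis we may assume $\tau=\pi=(1,2,\dots,n)$, so it suffices to prove that $E_{n,\pi}(a_1,a_2,\dots,a_n)\cong E_n=E_{n,\pi}(1,1,\dots,1)$ whenever all $a_i\neq 0$. The multiplication table is $e_i\cdot e_i=a_ie_{i+1}$ for $1\le i\le n-1$ and $e_n\cdot e_n=a_ne_1$.

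The key idea is to look for a new evolution basis of the form $e_i'=\lambda_ie_i$ with all $\lambda_i\neq 0$. Since off-diagonal products remain zero, this is again an evolution basis, and one computes $e_i'\cdot e_i'=\lambda_i^2a_ie_{i+1}=\frac{\lambda_i^2 a_i}{\lambda_{i+1}}e_{i+1}'$ for $1\le i\le n-1$ and $e_n'\cdot e_n'=\frac{\lambda_n^2 a_n}{\lambda_1}e_1'$. So I need to choose nonzero scalars $\lambda_1,\dots,\lambda_n$ solving the system $\lambda_i^2a_i=\lambda_{i+1}$ for $1\le i\le n-1$ and $\lambda_n^2a_n=\lambda_1$. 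The first $n-1$ equations let me express $\lambda_2,\dots,\lambda_n$ recursively in terms of $\lambda_1$ (each step is a polynomial in $\lambda_1$ with a nonzero coefficient coming from the product of the $a_i$'s, so nonzero $\lambda_1$ yields nonzero $\lambda_i$). Substituting into the closing equation $\lambda_n^2a_n=\lambda_1$ gives a single polynomial equation in $\lambda_1$ of the form $c\,\lambda_1^{2^{n-1}}=\lambda_1$ for an explicit nonzero constant $c$ built from the $a_i$'s; dividing by $\lambda_1$ this becomes $c\,\lambda_1^{2^{n-1}-1}=1$, which has a (nonzero) solution because the ground field has characteristic zero—though to guarantee a root one really wants the field to be algebraically closed, which is the standing assumption here since we work with complex evolution algebras. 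Once such a $\lambda_1$ is fixed, all $\lambda_i$ are determined and nonzero, and in the basis $\{e_i'\}$ all structure constants equal $1$, i.e. the algebra is exactly $E_n$.

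The main obstacle is precisely the solvability of the closing equation $c\,\lambda_1^{2^{n-1}-1}=1$: it requires extracting a $(2^{n-1}-1)$-th root of $c^{-1}$, which is why algebraic closure of the field (here $\mathbb{C}$) is used. I would make sure to state explicitly that this is where the complex-field hypothesis enters. A secondary point to check carefully is bookkeeping: verifying that each $\lambda_i$ produced by the recursion is genuinely nonzero (immediate, since it is a monomial $a_1^{\epsilon_1}\cdots a_{i-1}^{\epsilon_{i-1}}\lambda_1^{2^{i-1}}$ with all $a_j\neq 0$ and $\lambda_1\neq 0$), and that the resulting map $e_i\mapsto e_i'$ is indeed an algebra isomorphism—it is a bijective linear map respecting the multiplication table by the computation above. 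With these in hand the proof is complete.
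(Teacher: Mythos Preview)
Your approach is essentially the same as the paper's: reduce to $\pi=(1,2,\dots,n)$ via Proposition~\ref{prop34} and then perform a diagonal rescaling $e_i'=\lambda_ie_i$; the paper simply writes down explicit closed-form values for the $\lambda_i$ as $(2^n-1)$-th roots rather than arguing existence via the recursion. One small bookkeeping slip: since $\lambda_n=\lambda_1^{2^{n-1}}\cdot(\text{const})$, the closing equation $\lambda_n^2a_n=\lambda_1$ has degree $2^n$ in $\lambda_1$, so after dividing by $\lambda_1$ you need a $(2^n-1)$-th root, not a $(2^{n-1}-1)$-th root --- this matches the paper's radical and does not affect the validity of your argument.
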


\begin{proof} Taking into account Proposition \ref{prop34} it is sufficient to establish isomorphism between
evolution algebra $E_{n,\pi}(a_1, a_2, \dots, a_n)$ with $\pi=(1, 2, \dots, n)$ and non zero values of $a_i, \ 1\leq i \leq n$ and evolution algebra $E_n.$

The application of the following scaling of basis: $$e'_i = A_ie_i, \ 1\leq i \leq n \quad \mbox{with} \
A_i = \sqrt[2^n-1]{\frac {1}{a_i^{2^{n-1}}a_{i+1}^{2^{n-2}}
\dots a_n^{2^{i-1}}a_1^{2^{i-2}}a_2^{2^{i-3}}\dots a_{i-1}}},$$ deduces products
$$\left\{\begin{array}{ll}e'_i\cdot e'_i = e'_{i+1}, & 1 \leq i \leq n-1,\\[1mm]
e'_n\cdot e'_n = e'_{1}, & \\[1mm]
e'_i\cdot e'_j = 0, & i \neq j.
\end{array}\right.$$
\end{proof}

Now we consider the case of $a_i=0$ for some $i\in\{1, 2, \dots, n\}$.

\begin{prop} \label{prop36}
Any evolution algebra of permutation $E_{n,\pi}(a_1, a_2, \dots,
a_n)$ with $\pi=(l_1, l_2, \dots, l_n)$ and condition $a_i=0$ for
some $i\in\{1, 2, \dots, n\}$ is isomorphic to the algebra $EN_{k_1}\oplus
EN_{k_2}\oplus \dots \oplus EN_{k_r}.$
\end{prop}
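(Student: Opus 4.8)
The plan is to reduce to a single $n$-cycle and then track how the zero structural constants break the cycle into independent nilpotent pieces. By the cycle-decomposition argument of Proposition \ref{prop33} — which only used that $\pi$ is a product of independent cycles, not that the $a_i$ are nonzero — we may write $E_{n,\pi}(a_1,\dots,a_n)$ as a direct sum of evolution algebras of single-cycle permutations. Hence it suffices to treat the case $\pi=(l_1,l_2,\dots,l_n)$ a single $n$-cycle with at least one $a_i=0$, and to show that such an algebra is isomorphic to $EN_{k_1}\oplus\dots\oplus EN_{k_r}$ for appropriate $k_j$ with $k_1+\dots+k_r=n$. Relabelling the basis exactly as in Proposition \ref{prop34} (the relabelling is purely combinatorial and does not care whether the $a_i$ vanish), we may assume $\pi=(1,2,\dots,n)$, so that $e_i\cdot e_i=a_ie_{i+1}$ for $1\le i\le n-1$ and $e_n\cdot e_n=a_ne_1$, with at least one index $i_0$ where $a_{i_0}=0$.

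Next I would cut the cycle at every vanishing constant. Let $i_1<i_2<\dots<i_r$ be all the indices with $a_{i_j}=0$; cyclically these points divide $\{1,\dots,n\}$ into $r$ consecutive arcs, say of lengths $k_1,k_2,\dots,k_r$ (with $k_1+\dots+k_r=n$), where each arc is a maximal run $e_{p}\cdot e_{p}=a_pe_{p+1}$, \dots with a nonzero constant at every internal step and a zero constant at its final step. On a single arc $\{s, s+1,\dots, s+k-1\}$ (indices taken cyclically), the subspace spanned by $e_s,\dots,e_{s+k-1}$ is a subalgebra, since $e_{s+k-1}\cdot e_{s+k-1}=0$ and all cross products vanish; moreover $E$ is the direct sum of these $r$ subalgebras. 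On each arc the nonzero internal constants can be scaled away by a triangular change of basis: set $e'_s=e_s$ and inductively $e'_{s+t}=\big(\prod_{j=0}^{t-1}a_{s+j}\big)^{-1}e_{s+t}$ for $1\le t\le k-1$ — this is legitimate because all the $a_{s+j}$ with $0\le j\le k-2$ along the arc are nonzero — which yields $e'_{s+t}\cdot e'_{s+t}=e'_{s+t+1}$ for $0\le t\le k-2$ and $e'_{s+k-1}\cdot e'_{s+k-1}=0$, i.e. exactly the multiplication table of $EN_k$. Assembling the arcs gives $E_{n,\pi}(a_1,\dots,a_n)\cong EN_{k_1}\oplus\dots\oplus EN_{k_r}$.

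The only genuinely delicate point is the bookkeeping at the "seam" of the cycle, i.e. when the arc containing index $n$ wraps around to index $1$: one must make sure the decomposition into arcs is done cyclically (starting each arc just after a zero constant and ending it at the next zero constant), so that the product $e_n\cdot e_n=a_ne_1$ is handled correctly — if $a_n\ne0$ the arc genuinely crosses from $n$ to $1$, and if $a_n=0$ then $n$ is itself a cut point and the arc through $1$ starts at $1$. Everything else is a routine triangular rescaling identical in spirit to the computation in the proof of Theorem \ref{thm35}, restricted to each arc; I would state the arc lengths $k_1,\dots,k_r$ explicitly and then simply exhibit the change of basis and check the three product relations. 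No step presents a real obstacle; the care is entirely in the cyclic indexing.
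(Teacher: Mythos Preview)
Your overall strategy matches the paper's almost exactly: reduce to $\pi=(1,2,\dots,n)$ via the relabelling of Proposition~\ref{prop34}, cut the cycle at the indices where $a_i=0$, observe that the resulting arcs are subalgebras whose direct sum is $E$, and rescale each arc to $EN_k$. The paper handles the ``seam'' by an explicit cyclic shift of the basis so that one of the zeros sits at position $n$, after which the arcs are ordinary intervals; your cyclic-arc language accomplishes the same thing. (Your opening appeal to Proposition~\ref{prop33} is harmless but unnecessary: the hypothesis already assumes $\pi$ is a single $n$-cycle.)

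There is, however, a genuine computational error in your rescaling. The change of basis $e'_{s+t}=\bigl(\prod_{j=0}^{t-1}a_{s+j}\bigr)^{-1}e_{s+t}$ does \emph{not} produce $e'_{s+t}\cdot e'_{s+t}=e'_{s+t+1}$: from $e'_{s}\cdot e'_{s}=a_s e_{s+1}$ you already need $e'_{s+1}=a_s e_{s+1}$, and then $e'_{s+1}\cdot e'_{s+1}=a_s^{2}a_{s+1}e_{s+2}$, so $e'_{s+2}=a_s^{2}a_{s+1}e_{s+2}$, and in general
\[
e'_{s+t}=a_s^{\,2^{t-1}}a_{s+1}^{\,2^{t-2}}\cdots a_{s+t-1}\, e_{s+t},
\]
with powers-of-two exponents (exactly the formula the paper uses for $EN_k(a_1,\dots,a_{k-1})$). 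Your formula has the wrong sign of the exponent and the wrong exponents. The fix is mechanical and the rest of your argument goes through unchanged, but as written the displayed change of basis is incorrect.
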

\begin{proof}
Similarly to the proof of Proposition \ref{prop34} taking the
change
$$e'_1=e_1, \quad e'_i=e_{\pi^{i-1}(1)}, \ 2 \leq  i \leq n,$$
we can suppose $\pi=(1, 2, \dots, n).$

Let $a_{i_1}=a_{i_2} = \dots =a_{i_r} =0$ for $i_1 < i_2 <
\dots < i_r$ and the rest are non-zero.

If $i_r=n$, then similarly as above we can assume that all $a_i=1$ for $1\leq i \leq n-1$ and hence,
$E_{n,\pi}(a_1, a_2, \dots, a_n)$ with $\pi=(1, 2, \dots, n)$ is isomorphic to the algebra $E_n.$

If $i_r<n$, then taking the following change of basis:
$$e_s^1=e_{i_r+s}, \ 1 \leq s \leq n-i_r,$$
$$e^1_{n-i_r+s}=e_s, \ 1 \leq s \leq i_1,$$
$$e^2_{s} = e_{i_1+s}, \ 1 \leq s \leq i_2-i_1,$$
$$\dots \quad \quad \dots \quad \quad \dots$$
$$e^r_{s} = e_{i_{r-1}+s},\ 1\leq s \leq i_r-i_{r-1}.$$
we obtain that $E_{n,\pi}(a_1, a_2, \dots, a_n)$ with $\pi=(1, 2,
\dots, n)$ is isomorphic to the algebra $EN_{k_1}(a_1, \dots,
a_{k_1-1})\oplus EN_{k_2}(b_1, \dots, b_{k_2-1})\oplus \dots
\oplus EN_{k_r}(c_1, \dots, c_{k_r-1}),$ where each of the algebra
$EN_{k_s}(*, *, \dots, *), \ 1\leq s \leq r$ has the model of the
following evolution algebra:
$$EN_{k}(a_1, \dots, a_{k-1}): \quad \left\{\begin{array}{ll}e_i\cdot e_i = a_ie_{i+1}, & 1 \leq i \leq k-1, \ a_i\neq 0,\\[1mm]
e_i\cdot e_j = 0, & i \neq j.
\end{array}\right.$$
Taking the basis transformation in the algebra $EN_{k}(a_1, \dots, a_{k-1}):$
$$e_1'=e_1, \ e_2' = a_1e_2, \
e_3'=a_1^2a_2e_3, \ \dots, \ e_k' =
a_1^{2^{k-2}}a_2^{2^{k-3}}\dots a_{k-1}e_k$$ we have that
$EN_{k}(a_1, \dots, a_{k-1})$ is isomorphic to the algebra
$EN_{k}$, which complete the proof of proposition.
\end{proof}

Below we establish the isomorphism of evolution algebras of permutations with given conjugated permutations.

\begin{thm}  If permutations $\pi_1, \pi_2 \in  S_n$ are conjugated, then evolution algebras $E_{n,\pi_1}(a_1, \dots, a_{n})$ and $E_{n,\pi_2}(a_1, \dots, a_n)$ are isomorphic.
\end{thm}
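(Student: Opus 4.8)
The plan is not to construct an isomorphism by hand, but to reduce both algebras to a common normal form using the structure results already established; for this I read the statement with the (essential) hypothesis $a_i\neq 0$ for all $i$, in line with Proposition \ref{prop33} and Theorem \ref{thm35}. Since $\pi_1$ and $\pi_2$ are conjugate in $S_n$, they have the same cycle type, so I would fix the common multiset of cycle lengths (fixed points included as $1$-cycles) as $k_1,k_2,\dots,k_r$ with $k_1+k_2+\dots+k_r=n$.

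First I would decompose each side along its cycles. Writing $\pi_1=\pi_{1,1}\circ\cdots\circ\pi_{1,r}$ for the factorization of $\pi_1$ into independent cycles, with $\pi_{1,j}$ of length $k_j$, Proposition \ref{prop33} gives
\[
E_{n,\pi_1}(a_1,\dots,a_n)\;\cong\;E_{k_1,\pi_{1,1}}(\ast)\oplus\cdots\oplus E_{k_r,\pi_{1,r}}(\ast)
\]
with all the new structure constants again nonzero. Each summand is an evolution algebra of permutations attached to a single $k_j$-cycle with nonzero coefficients, so Theorem \ref{thm35} identifies it with $E_{k_j}$ (for a block of size one this is just the rescaling $e'=b^{-1}e$ turning $ee=be$ into the idempotent algebra $E_1$). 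Hence $E_{n,\pi_1}(a_1,\dots,a_n)\cong E_{k_1}\oplus\cdots\oplus E_{k_r}$, and since $\pi_2$ has the same cycle lengths the identical argument yields $E_{n,\pi_2}(a_1,\dots,a_n)\cong E_{k_1}\oplus\cdots\oplus E_{k_r}$ as well; the two algebras are therefore isomorphic.

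An explicit alternative, which makes the difficulty visible, is to fix $\sigma\in S_n$ with $\pi_2=\sigma\pi_1\sigma^{-1}$ and seek an isomorphism $\varphi(e_i)=\lambda_i f_{\sigma(i)}$ onto the evolution basis $\{f_i\}$ of $E_{n,\pi_2}(a_1,\dots,a_n)$: here the orthogonality $e_ie_j=0$ ($i\neq j$) is automatic, the identity $\sigma\pi_1=\pi_2\sigma$ holds by the choice of $\sigma$, and what remains is the multiplicative scalar system $\lambda_i^2\,a_{\sigma(i)}=a_i\,\lambda_{\pi_1(i)}$. This is the main obstacle: the system decouples along the cycles of $\pi_1$, and on each cycle one assigns $\lambda$ freely at one vertex, propagates around, and must clear a single consistency equation by extracting a suitable $(2^{k_j}-1)$-th root — exactly the computation behind Theorem \ref{thm35}, and exactly the place where $a_i\neq 0$ is used. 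If some $a_i$ vanish, the cycle blocks instead split, by Proposition \ref{prop36}, into nilpotent summands $EN_m$ whose sizes depend on the positions of the zeros relative to the cycle, and conjugate permutations sharing the same coefficient vector may then produce non-isomorphic algebras; so the genuine content of the theorem is the bookkeeping that reduces it to Proposition \ref{prop33} and Theorem \ref{thm35}.
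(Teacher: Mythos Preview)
Your approach differs from the paper's, which gives a one-line direct construction: choose $g\in S_n$ with $g\pi_1=\pi_2 g$ and set $f(e_i)=e_{g(i)}$, then check $f(e_i\cdot e_i)=a_i\,e_{g\pi_1(i)}=a_i\,e_{\pi_2 g(i)}$ and declare this equal to $f(e_i)\cdot f(e_i)$. This is precisely your ``explicit alternative'' with every $\lambda_i=1$, and --- as you anticipated --- the scalar bookkeeping does not close: in $E_{n,\pi_2}(a_1,\dots,a_n)$ one has $e_{g(i)}\cdot e_{g(i)}=a_{g(i)}\,e_{\pi_2 g(i)}$, so the paper's map is an isomorphism only when $a_{g(i)}=a_i$ for every $i$.

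Your decision to read the statement with the hypothesis $a_i\neq 0$ and to argue via the normal form (Proposition~\ref{prop33} and Theorem~\ref{thm35}, reducing both sides to $E_{k_1}\oplus\cdots\oplus E_{k_r}$ determined only by the common cycle type) is therefore the correct repair, and your closing caveat is on target: without nonvanishing the statement fails. For instance with $n=4$, $\pi_1=(1\,2\,3\,4)$, $\pi_2=(1\,3\,2\,4)$ and $(a_1,a_2,a_3,a_4)=(0,1,0,1)$, Proposition~\ref{prop36} gives $E_{4,\pi_1}(0,1,0,1)\cong EN_2\oplus EN_2$ while $E_{4,\pi_2}(0,1,0,1)\cong EN_1\oplus EN_3$, and these are not isomorphic (the latter contains an element $x$ with $x^{[3]}\neq 0$, the former does not). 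So compared with the paper you take a longer route, but it is the route that actually arrives.
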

\begin{proof} Let $\pi_1, \pi_2 \in  S_n$ are conjugated, then there exists $g \in S_n$ such that $g \pi_1 =  \pi_2g$.
The map $f: E_{n,\pi_1}\rightarrow E_{n,\pi_2}$ defined by $f(e_i) =
e_{g(i)}$ is isomorphism. Indeed,
$$ a_ie_{g\pi_1(i)}= f(a_ie_{\pi_1(i)})=f(e_i \cdot e_i)=f(e_i) \cdot f(e_i) =
a_ie_{g(i)} \ast e_{g(i)} = a_ie_{\pi_2g(i)}. $$
\end{proof}

Thus, for an algebra $E_{n,\pi}(a_1, \dots, a_n)$ we can always assume that $\pi= (1, 2, \dots , n)$ and table of multiplication is $$\left\{\begin{array}{ll}e_i\cdot e_i = a_ie_{i+1}, & 1 \leq i \leq n-1,\\[1mm]
e_n\cdot e_n = a_ne_{1}, & \\[1mm]
e_i\cdot e_j = 0, & \ i \neq j,
\end{array}\right.$$ where $ a_i \in  \{0; 1\}$.

\begin{prop} An arbitrary evolution algebra $E_{n,\pi}(a_1, a_2, \dots , a_n)$ with $\pi= (1, 2, \dots , n)$
is isomorphic to the algebra $E_n$ or the direct sum of evolution
algebras $EN_{k_1}\oplus EN_{k_2}\oplus \dots \oplus EN_{k_r}.$
\end{prop}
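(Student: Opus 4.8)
The plan is to recognize that this proposition is simply the packaging of the two preceding results, Theorem \ref{thm35} and Proposition \ref{prop36}, obtained by a dichotomy on whether the structural constants $a_i$ vanish. First I would invoke the normalization established in the paragraph just before the statement: after a suitable permutation and scaling of the basis we may assume $\pi=(1,2,\dots,n)$ and $a_i\in\{0;1\}$, so the only remaining datum is the set $S=\{i : a_i\neq 0\}\subseteq\{1,2,\dots,n\}$.

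In the first case, $S=\{1,2,\dots,n\}$, i.e. $a_i\neq 0$ for every $i$. Since $\pi=(1,2,\dots,n)$ is a single $n$-cycle, Theorem \ref{thm35} applies verbatim and gives $E_{n,\pi}(a_1,a_2,\dots,a_n)\cong E_n$. In the second case, $S\neq\{1,2,\dots,n\}$, i.e. $a_j=0$ for at least one index $j$; again $\pi$ is the $n$-cycle $(1,2,\dots,n)$, so Proposition \ref{prop36} applies and yields $E_{n,\pi}(a_1,a_2,\dots,a_n)\cong EN_{k_1}\oplus EN_{k_2}\oplus\dots\oplus EN_{k_r}$, the $k_s$ being the lengths of the runs between consecutive zero positions read cyclically (as in the proof of that proposition). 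I would also point out the degenerate subcase in which all $a_i=0$: then this reduces to the direct sum of $n$ copies of the one-dimensional abelian algebra $EN_1$, which is still of the claimed form. Since the two cases are exhaustive, the proposition follows.

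There is essentially no genuine obstacle here, as all the real work has been done in the earlier statements; the one bookkeeping point I would double-check explicitly is that in the second case the blocks must be read cyclically around the $n$-cycle (the relabelling starts with the run immediately following the last zero index $i_r$, so that $e_n\cdot e_n=e_1$ is respected), which is precisely the change of basis carried out inside the proof of Proposition \ref{prop36}. Thus the proof amounts to a short case analysis with citations, and the only thing worth emphasizing for the reader is why these two cases, together with the prior normalization, cover every evolution algebra of the form $E_{n,\pi}(a_1,\dots,a_n)$ with $\pi=(1,2,\dots,n)$.
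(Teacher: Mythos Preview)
Your proof is correct and follows essentially the same dichotomy as the paper: all $a_i\neq 0$ gives $E_n$ via Theorem \ref{thm35}, and at least one $a_i=0$ gives the direct sum of $EN_{k_s}$'s. The only difference is cosmetic: rather than citing Proposition \ref{prop36} for the second case as you do, the paper writes out the explicit scaling constants $A_1,\dots,A_n$ for the subcase of a single vanishing $a_k$ and then appeals to ``similar arguments'' for several zeros---which is precisely the content of Proposition \ref{prop36} anyway.
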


\begin{proof} If all $a_i\neq 0$, then due to Theorem \ref{thm35} we have that algebra $E_{n,\pi}(a_1, a_2, \dots , a_n)$ is isomorphic to the $E_n$.

Let $a_k=0$ for some $k$ and $a_i\neq 0$ for $i \neq k.$ Taking the basis transformation in the following form:
$$e_1'=A_1e_{k+1}, \ e_2'=A_2e_{k+2}, \ \dots ,\ e_{n-k}'=A_{n-k}e_n, $$
$$e_{n-k+1}'=A_{n-k+1}e_1, \ e_{n-k+2}'=A_{n-k+2}e_2, \dots,
e_n'=A_ne_k,$$ where
$$A_1=1, \quad A_2 = a_{k+1}, \quad A_3 = a_{k+1}^2a_{k+2}, \quad \dots, \quad  A_{n-k+1} = a_{k+1}^{2^{n-k-1}}
a_{k+2}^{2^{n-k-2}}\dots a_n,$$
$$A_{n-k+2} = a_{k+1}^{2^{n-k}}
a_{k+2}^{2^{n-k-1}}\dots a_n^2a_1, \quad \dots, \quad A_{n} =
a_{k+1}^{2^{n-2}} a_{k+2}^{2^{n-3}}\dots
a_n^{2^{k-1}}a_1^{2^{k-2}}a_2^{2^{k-3}}\dots a_{k-1},$$ we derive
isomorphism between algebra $E_{n,\pi}(a_1, \dots, a_{k-1}, 0,
a_{k+1}, \dots, a_n)$ and algebra $EN_n.$

Applying similar arguments, we can establish that for ($r+1$)-times of parameters
$a_i$ are equal to zero an algebra $E_n(a_1, a_2, \dots, a_n)$ is isomorphic to
$$EN_{k_1}\oplus EN_{k_2}\oplus \dots \oplus EN_{k_r}.$$
\end{proof}
We resume the above results in the main theorem of this section.

\begin{thm} \label{thm39}
An arbitrary evolution algebra of permutations $E_{n,\pi}(a_1, a_2, \dots,
a_n)$ is isomorphic to a direct sum of algebras
$E_{p_1}, \  E_{p_2}, \  \dots, \ E_{p_s}, \ EN_{k_1}, \ EN_{k_2},  \ \dots, \ EN_{k_r},$
i.e.,
$$E_{n,\pi}(a_1, a_2, \dots, a_n) \cong E_{p_1}\oplus E_{p_2} \oplus \dots \oplus  E_{p_s} \oplus EN_{k_1} \oplus EN_{k_2} \oplus \dots \oplus EN_{k_r}.$$
\end{thm}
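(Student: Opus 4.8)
The plan is to combine the structural results already proved in this section. Given an arbitrary permutation $\pi \in S_n$, we first decompose it into independent cycles $\pi = \pi_1 \circ \pi_2 \circ \dots \circ \pi_r$ with cycle lengths $k_1 + k_2 + \dots + k_r = n$; this decomposition is unique up to order, as recalled earlier. On each cyclic block we need to know the behaviour of the corresponding structure constants. A cycle $\pi_j = (l_1, \dots, l_{k_j})$ together with the scalars $a_{l_1}, \dots, a_{l_{k_j}}$ attached to its symbols gives an evolution algebra $E_{k_j, \pi_j}(\ast, \dots, \ast)$, and I would first argue that $E_{n,\pi}(a_1, \dots, a_n)$ is isomorphic to the direct sum $\bigoplus_{j=1}^r E_{k_j,\pi_j}(\ast,\dots,\ast)$ of these block subalgebras. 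When all $a_i \neq 0$ this is exactly Proposition \ref{prop33}; in the general case the same change of basis $e_{i,j} = e_{l_i}$ works verbatim, since the products $e_{l_i} \cdot e_{l_i} = a_{l_i} e_{\pi(l_i)}$ never mix symbols from distinct cycles regardless of whether some $a_{l_i}$ vanish. So the reduction to single-cycle evolution algebras holds unconditionally.

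Next I would treat a single cyclic block, i.e.\ an algebra $E_{k,\tau}(b_1,\dots,b_k)$ with $\tau$ a $k$-cycle. By the relabelling in Proposition \ref{prop34} (and the remark that the map $f(e_i)=e_{g(i)}$ is an isomorphism whenever $g\tau_1 = \tau_2 g$) we may assume $\tau = (1,2,\dots,k)$, at the cost of cyclically permuting the scalars $b_i$. Now there are two cases. If every $b_i \neq 0$, Theorem \ref{thm35} says $E_{k,\tau}(b_1,\dots,b_k) \cong E_k$. If at least one $b_i = 0$, Proposition \ref{prop36} says $E_{k,\tau}(b_1,\dots,b_k)$ is isomorphic to a direct sum $EN_{m_1} \oplus \dots \oplus EN_{m_q}$ of nilpotent single-cycle pieces. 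In either case the block is isomorphic to a direct sum of algebras drawn from the two families $\{E_p\}$ and $\{EN_m\}$.

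Finally I would assemble the pieces: applying the previous paragraph to each of the $r$ blocks and summing, we get
$$E_{n,\pi}(a_1,\dots,a_n) \;\cong\; \bigoplus_{j=1}^{r} \Big( \text{direct sum of } E_p\text{'s and } EN_m\text{'s} \Big),$$
and since a direct sum of direct sums is again a direct sum we may collect the idempotent-type summands as $E_{p_1}\oplus\dots\oplus E_{p_s}$ and the nilpotent-type summands as $EN_{k_1}\oplus\dots\oplus EN_{k_r}$ (relabelling indices), which is precisely the asserted form. I do not expect a genuine obstacle here: every nontrivial ingredient — the scaling of basis in Theorem \ref{thm35}, the surgery on indices in Proposition \ref{prop36}, the conjugation isomorphism — has already been established, so the only thing to be careful about is bookkeeping, namely checking that the block-diagonal change of basis is a genuine isomorphism even when some structure constants vanish (it is, because the off-diagonal products are identically zero by definition of an evolution algebra of permutations). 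The mildly delicate point worth spelling out is therefore the unconditional version of Proposition \ref{prop33}, after which the result is immediate.
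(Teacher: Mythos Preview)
Your proposal is correct and matches the paper's approach exactly: the paper itself gives no separate proof of Theorem~\ref{thm39}, simply stating that it ``resumes the above results,'' and your write-up is precisely the assembly of Propositions~\ref{prop33}, \ref{prop34}, \ref{prop36} and Theorem~\ref{thm35} that this phrase is meant to invoke. Your observation that the block-diagonal decomposition of Proposition~\ref{prop33} goes through verbatim even when some $a_i$ vanish is the one point the paper leaves implicit, and you have handled it correctly.
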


In the description of evolution algebras of permutations from above theorem we get the importance of algebras $E_n,$ $EN_n$, idempotents and absolute nilpotent elements.

\

\section{Associative enveloping algebras of some evolution algebras}

\

For a complex two-dimensional evolution algebra $E$ of the list of Theorem \ref{thm22} we describe its
associative enveloping algebra $M(E)$:

$M(E_1) = alg\left<R_{e_1}=\left(\begin{matrix} 1& 0\\[1mm] 0&0 \end{matrix}
\right)\right>\cong xx=x;$

$M(E_2) = alg\left<R_{e_1}=\left(\begin{matrix} 1& 0\\[1mm] 0&0 \end{matrix}
\right), \ R_{e_2}=\left(\begin{matrix} 0& 0\\[1mm] 1&0 \end{matrix}
\right)\right>\cong xx=x, \ yx=x;$

$M(E_3) = alg\left<R_{e_1}=\left(\begin{matrix} 1& 1\\[1mm] 0&0 \end{matrix}
\right), \ R_{e_2}=\left(\begin{matrix} 0& 0\\[1mm] -1&-1 \end{matrix}
\right)\right>\cong xx=x, \ xy=-x, \ yx=y, \ yy=-y;$

$M(E_4) = alg\left<R_{e_1}=\left(\begin{matrix} 0& 1\\[1mm] 0&0 \end{matrix}
\right) \right>\cong xx=0;$

$M(E_5(0,0)) = alg\left<R_{e_1}=\left(\begin{matrix} 1& 0\\[1mm] 0&0 \end{matrix}
\right), \ R_{e_2}=\left(\begin{matrix} 0& 0\\[1mm] 0&1 \end{matrix}
\right)\right> \cong xx=x, \ yy=y;$

$M(E_5(a_2,a_3)) (a_2  =0  \ or \ a_3 = 0) \cong \left\{P \in
M_2(\mathbb{C}) \  | \
P= \left(\begin{matrix} b_1& b_2\\[1mm] 0&b_3 \end{matrix}
\right)\right\};$

$M(E_5(a_2,a_3)) (a_2a_3 \neq 0) = alg\left<R_{e_1}=\left(\begin{matrix} 1& a_2\\[1mm] 0&0 \end{matrix}
\right), \ R_{e_2}=\left(\begin{matrix} 0& 0\\[1mm] a_3&1 \end{matrix}
\right)\right> \cong M_2(\mathbb{C});$

$M(E_6(a_4)) = alg\left<R_{e_1}=\left(\begin{matrix} 0& 1\\[1mm] 0&0 \end{matrix}
\right), \ R_{e_2}=\left(\begin{matrix} 0& 0\\[1mm] 1&a_4 \end{matrix}
\right)\right> \cong M_2(\mathbb{C}).$

Take the element $x=\sum\limits_{i=1}^nx_ie_i.$ Let $x \cdot x =0,$
then we have
$$x \cdot x = \big(\sum\limits_{i=1}^nx_ie_i\big) \big(\sum\limits_{i=1}^nx_ie_i\big) = \sum\limits_{i=1}^nx^2_i\sum\limits_{j=1}^na_{i,j}e_j= \sum\limits_{j=1}^n \big(\sum\limits_{i=1}^nx_i^2a_{i,j}\big) e_j.$$

From this we have
$$\begin{cases}a_{1,1}x_1^2 + a_{2,1}x_2^2+\dots+a_{n,1}x_n^2 =0 \\[1mm]
a_{1,2}x_1^2+ a_{2,2}x_2^2+\dots+a_{n,2}x_n^2 =0 \\[1mm]
\dots \dots \dots \dots \dots \dots\dots\dots\dots \\[1mm]a_{1,n}x_1^2 + a_{2,n}x_2^2+\dots+a_{n,n}x_n^2 =0 \\[1mm]\end{cases}$$

This system has a non-trivial solution if and only if
$detA^t=detA=0.$

Note that for the case of Markov evolution algebra, by summing above
equalities, we conclude $x_1^2 + x_2^2+\dots +x_n^2=0.$ If Markov
evolution algebra is real, then $x_i=0, \ 1\leq i \leq n,$ that is,
$x=0.$

Thus we proved the following proposition.
\begin{prop} \label{prop41} A complex evolution algebra has non-trivial absolute nilpotent elements if and only if $detA=0.$ Moreover, if real evolution algebra is Markov, then it has only trivial absolute nilpotent elements.
\end{prop}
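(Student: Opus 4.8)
The plan is to work directly with the coordinate description of the equation $x\cdot x = 0$ that the excerpt has already set up. Writing $x=\sum_{i=1}^n x_i e_i$, the relation $e_i\cdot e_j=0$ for $i\neq j$ and $e_i\cdot e_i=\sum_k a_{i,k}e_k$ collapses the product to $x\cdot x = \sum_{j=1}^n\bigl(\sum_{i=1}^n x_i^2 a_{i,j}\bigr)e_j$, so $x$ is absolute nilpotent precisely when the vector $(x_1^2,\dots,x_n^2)^t$ lies in the kernel of $A^t$, where $A=(a_{i,j})$ is the matrix of structural constants. This is the homogeneous linear system displayed just before the statement.

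For the first assertion, I would argue both directions. If $\det A=0$, then $\det A^t=0$, so the system has a nonzero solution vector $(u_1,\dots,u_n)$; since the ground field has characteristic zero and (as we work over $\mathbb{C}$ here) is closed under square roots, choose $x_i$ with $x_i^2=u_i$, obtaining a nonzero $x$ with $x\cdot x=0$. Conversely, if there is a nontrivial absolute nilpotent $x$, then $(x_1^2,\dots,x_n^2)$ is a nonzero kernel vector of $A^t$ — nonzero because some $x_i\neq 0$ forces $x_i^2\neq 0$ in a field — hence $\det A^t=\det A=0$. The only subtlety worth a sentence is that the existence of the square roots is exactly where complexness (algebraic closedness, or at least closure under square roots) is used; over a general field of characteristic zero one direction can fail.

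For the second assertion, suppose the evolution algebra is Markov, i.e.\ $\sum_k a_{i,k}=1$ for every $i$. Summing the $n$ equations of the system gives $\sum_{i=1}^n x_i^2\bigl(\sum_{j=1}^n a_{i,j}\bigr)=0$, that is $\sum_{i=1}^n x_i^2 = 0$. If moreover the algebra is real, each $x_i^2\geq 0$, so the sum vanishing forces every $x_i=0$, hence $x=0$: the only absolute nilpotent element is trivial. This is precisely the observation recorded in the paragraph preceding the statement, so the proof essentially amounts to assembling these two elementary linear-algebra computations.

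There is no real obstacle here; the "hard part" is only bookkeeping — being careful that the kernel condition is on $A^t$ rather than $A$ (irrelevant for the determinant but relevant if one wanted to describe the solution set), and flagging the use of square roots / positivity of real squares at the two points where the field structure matters.
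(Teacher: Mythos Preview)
Your argument is correct and follows essentially the same route as the paper: reduce $x\cdot x=0$ to the linear system $A^t(x_1^2,\dots,x_n^2)^t=0$, invoke $\det A^t=\det A$, and for the Markov case sum the equations to get $\sum_i x_i^2=0$. If anything you are slightly more careful than the paper, which does not explicitly flag the square-root step where complexness is used.
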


For an $n$-dimensional evolution algebra $E$ we consider an
associative enveloping algebra $M(E).$ Due to linearity the right
multiplication operator one gets $M(E)=alg <R_{e_i} \ | \ e_i \ \
\mbox{is \ a \ basis \ element \ of} \ E>$.

Using the equalities $R_{e_i}(e_i)=\sum\limits_{k=1}^n a_{i,k}e_{k}$
and $R_{e_i}(e_j)=0$ for $i\neq j,$ we obtain a matrix form of
$R_{e_i}$ as follows:
$$R_{e_i}=\sum\limits_{k=1}^n
a_{i,k}e_{i,k},$$ where $e_{i,k}$ are matrix units.

One can get
\begin{equation} \label{eq41}
R_{e_i} R_{e_j} =
a_{i,j}\sum\limits_{k=1}^n a_{j,k}e_{i,k}.
\end{equation}

From \eqref{eq41} we conclude that $dim M(E) = r_1+r_2+ \dots+r_n$ with $$r_i=rank\left(\begin{matrix} a_{i,1}a_{1,1}& a_{i,1}a_{1,2} &
\dots& a_{i,1}a_{1,n}
\\[1mm] a_{i,2}a_{2,1}& a_{i,2}a_{2,2} & \dots& a_{i,2}a_{2,n}\\[1mm]
\vdots&\vdots&\vdots&\vdots\\[1mm] a_{i,n}a_{n,1}& a_{i,n}a_{n,2} & \dots& a_{i,n}a_{n,n} \end{matrix}
\right).$$

Further we shall consider some cases for $n$-dimensional evolution algebra $E$ with structural constant matrix $A$ and satisfying the condition: $dim M(E)=n.$

\begin{prop} Let $rank A=1.$ Then associative enveloping algebra $M(E)$
is isomorphic to one of the following algebras:
$$M^s \ : \ x_i x_j
= x_i, \quad 1 \leq i \leq s, \ 1 \leq j \leq n.$$
\end{prop}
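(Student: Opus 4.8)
The plan is to start from the formula $\dim M(E)=r_1+\dots+r_n$ derived just above, together with the hypothesis $\operatorname{rank}A=1$. Since $A$ has rank $1$, every row of $A$ is a scalar multiple of a single nonzero row vector $v=(v_1,\dots,v_n)$; write $a_{i,k}=\lambda_i v_k$ for scalars $\lambda_i$, where at least one $\lambda_i\neq 0$ (say $\lambda_i\neq 0$ precisely for $i$ in some index set, which after reordering the basis we may take to be $\{1,\dots,s\}$ with $\lambda_1,\dots,\lambda_s\neq 0$ and $\lambda_{s+1}=\dots=\lambda_n=0$). First I would plug this into the matrix whose rank is $r_i$: its $(k,\ell)$ entry is $a_{i,k}a_{k,\ell}=\lambda_i v_k\lambda_k v_\ell=\lambda_i\lambda_k v_k v_\ell$, so the whole matrix equals $\lambda_i\,(\lambda_k v_k)_k\otimes (v_\ell)_\ell$, a rank-one matrix when $i\le s$ (both outer vectors nonzero, since $v\neq 0$ and the $\lambda_k v_k$ are not all zero because $\lambda_1 v_{?}$... here one must check the vector $(\lambda_k v_k)_k$ is nonzero, which holds because some $\lambda_k\ne0$ and $v\ne0$, but one needs $\lambda_k$ and $v_k$ nonzero simultaneously — see the obstacle below) and the zero matrix when $i>s$. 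Hence $r_i=1$ for $i\le s$ and $r_i=0$ for $i>s$, giving $\dim M(E)=s$. But the standing assumption in this subsection is $\dim M(E)=n$, so in fact $s=n$: all $\lambda_i\neq 0$ and $R_{e_i}\neq 0$ for every $i$.

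Next I would identify $M(E)$ concretely using \eqref{eq41}. We have $R_{e_i}=\sum_k a_{i,k}e_{i,k}=\lambda_i\sum_k v_k e_{i,k}$, i.e. $R_{e_i}=\lambda_i\,c\,w^{t}$ written as a matrix $u_i w^{t}$ up to scalars, where $u_i$ is the $i$-th standard basis column vector and $w=(v_1,\dots,v_n)^{t}$. Then $R_{e_i}R_{e_j}=\lambda_i\lambda_j (w^{t}u_j)\,u_i w^{t}=\lambda_i\lambda_j v_j\,u_i w^{t}$. So the linear span of $\{R_{e_1},\dots,R_{e_n}\}$ is already closed under multiplication (each product is a scalar times one of the generators), hence equals $M(E)$, and $\dim M(E)=n$ consistent with the hypothesis. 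Thus $M(E)$ has basis $f_i:=u_i w^{t}$ with $f_i f_j = v_j f_i$. Rescaling $w$ so that we may normalise: since $v\neq 0$, pick an index $j_0$ with $v_{j_0}\neq 0$; I would then renormalise the basis of $M(E)$ (replace $f_i$ by $f_i/v_{j_0}$, adjust) to bring the structure constants to the form $x_i x_j=x_i$ for all $i,j$. The cleanest route: after reindexing so the generators with $v_j\ne0$ come first — but in the $\dim M(E)=n$ case all $R_{e_i}\ne0$ forces all $\lambda_i\ne0$ while the $v_j$ need not all be nonzero, which is exactly why the statement reads $1\le i\le s,\ 1\le j\le n$ with possibly $s<n$. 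So I would set $s=\#\{j : v_j\neq 0\}$, reorder the basis so $v_1,\dots,v_s\neq0$ and $v_{s+1}=\dots=v_n=0$, renormalise (scale the evolution basis vectors $e_j$ by suitable nonzero constants so that $v_j=1$ for $j\le s$ and the $R_{e_i}$ keep their shape), and read off $f_if_j=v_j f_i$, which becomes $x_ix_j=x_i$ for $1\le i\le s$, $1\le j\le n$ and $x_ix_j=0$ when $i>s$. Finally note $f_{s+1},\dots,f_n$ act only as "right-absorbing" elements; a further change of basis inside $M(E)$ of the form $f_i\mapsto f_i$ for $i\le s$, $f_i\mapsto f_i-\sum_{k\le s}(\text{coeff})f_k$ removes the asymmetry and yields exactly $M^s$.

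The main obstacle I anticipate is the bookkeeping with the two independent "$s$''s: the number of nonzero $\lambda_i$ (which the hypothesis $\dim M(E)=n$ forces to be $n$) versus the number of nonzero $v_j$ (the genuine invariant $s$ appearing in the statement), plus making sure the renormalisation of the evolution basis that sets $v_j=1$ for $j\le s$ does not destroy the rank-one form — it does not, because scaling $e_j\mapsto \mu_j e_j$ replaces $a_{i,k}$ by $\mu_i^2\mu_k^{-1}a_{i,k}$, which keeps $A$ of rank one and lets us normalise $v$. I would also need the small linear-algebra lemma that a nonzero rank-one algebra-of-matrices of the form $\{u_iw^t\}$ is, as an associative algebra, determined by the pattern of which $w^t u_j=v_j$ vanish; this is a direct computation from $R_{e_i}R_{e_j}=\lambda_i\lambda_j v_j\,u_iw^t$ and poses no real difficulty. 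Once these are pinned down, the isomorphism $M(E)\cong M^s$ is immediate by comparing multiplication tables.
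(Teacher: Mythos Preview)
Your approach is essentially the paper's: write $a_{i,k}=\lambda_i v_k$ (the paper writes $a_{i,k}=t_i a_k$), use $\dim M(E)=n$ to force all $\lambda_i\neq 0$, compute $R_{e_i}R_{e_j}=\lambda_j v_j\,R_{e_i}$, let $s$ be the number of nonzero $v_j$, reorder, and scale so that $\lambda_j v_j=1$ for $j\le s$. The paper does exactly this in three lines, without the preliminary detour through the $r_i$ formula; once you know every product of generators is a scalar multiple of a generator, $\dim M(E)=n$ immediately forces all $R_{e_i}\neq 0$, so the $r_i$ computation (and the worry about whether $(\lambda_k v_k)_k$ is nonzero) is unnecessary.

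One point to clean up: after normalising you correctly obtain $f_if_j=v_jf_i$, which gives $x_ix_j=x_i$ for \emph{all} $i$ and $1\le j\le s$, and $x_ix_j=0$ for $j>s$. You then transcribe this with $i$ and $j$ swapped and introduce a further ``change of basis to remove the asymmetry''; that step is both unnecessary and does not do what you claim (replacing $f_i$ by $f_i-\sum_{k\le s}c_kf_k$ for $i>s$ does not convert the $j\le s$ condition into an $i\le s$ condition). In fact the paper's displayed $M^s$ has the indices written the other way around from what its own computation produces; your computation $f_if_j=v_jf_i$ is the right one, and once you have normalised $\lambda_jv_j=1$ for $j\le s$ you are done.
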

\begin{proof} Due to conditions of proposition we have the table of multiplication of algebra $E:$
$$e_i \cdot e_i = t_i\sum\limits_{k=1}^na_{k}e_k.$$

The condition $dim M(E)=n$ implies $t_i \neq 0$ for any $i \ (1
\leq i \leq n)$ and basis of $M(E)$ can be chosen as $\{R_{e_1},
R_{e_2}, \dots, R_{e_n}\},$ where $R_{e_i} =
t_i\sum\limits_{k=1}^na_{k}e_{i,k}$. Therefore, the table of
multiplication of the algebra $M(E)$ has the form: $$R_{e_i}
R_{e_j} = t_ja_jR_{e_i}, \ 1\leq i, j \leq n.$$

By appropriate shifting of basis elements, without loss of generality, one can assume $a_i\neq 0$ for $1\leq i \leq s, \ s\leq n$ and $a_j=0$ for $s+1 \leq j \leq n.$ The scaling the basis elements $e_i, \ 1\leq i \leq s$ reduces our study to the case of $t_ia_i=1.$ Thus, we obtain
the tables of multiplication of associative enveloping algebras $M^s.$
\end{proof}

Below we present the description of $n$-dimensional associative enveloping algebras $M(E)$ for evolution algebra $E$ with $rank A=n.$

\begin{prop} \label{prop43} Let $rank A=n.$ Then associative enveloping algebra $M(E)$ is isomorphic to the algebra
$$M_1: \quad  \ x_i x_i
= x_i, \quad 1 \leq i \leq n.$$
\end{prop}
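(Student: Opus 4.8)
The plan is to follow the same strategy as in the $\mathrm{rank}\,A=1$ case: use the formula $\dim M(E) = r_1 + r_2 + \dots + r_n$ together with the hypotheses $\mathrm{rank}\,A = n$ and $\dim M(E) = n$ to pin down the structure constants, and then to diagonalize the right multiplication operators simultaneously. First I would observe that since $\mathrm{rank}\,A = n$, the matrix $A$ is invertible, so in particular no row of $A$ is identically zero; hence for each $i$ there is some $j$ with $a_{i,j}\neq 0$. Looking at the matrix whose rank is $r_i$, its $j$-th row is $a_{i,j}(a_{j,1}, a_{j,2}, \dots, a_{j,n})$, i.e. a nonzero scalar multiple of the $j$-th row of $A$. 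Since $A$ is invertible its rows are linearly independent, so the rows of the $r_i$-matrix that survive (those with $a_{i,j}\neq 0$) are linearly independent; this forces $r_i$ to equal the number of indices $j$ with $a_{i,j}\neq 0$, which is at least $1$. Then $n = \dim M(E) = \sum_i r_i \geq \sum_i 1 = n$ forces $r_i = 1$ for every $i$, which means each row of $A$ has exactly one nonzero entry. Combined with invertibility of $A$, this means $A$ is a permutation matrix times a diagonal matrix: there is a permutation $\sigma \in S_n$ and nonzero scalars $t_i$ with $e_i \cdot e_i = t_i e_{\sigma(i)}$.

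At this point the algebra $E$ is exactly an evolution algebra of permutations $E_{n,\sigma}(t_1,\dots,t_n)$ with all $t_i\neq 0$. By Theorem \ref{thm39} (or more directly Theorem \ref{thm35} applied cycle by cycle via Proposition \ref{prop33}), $E$ is isomorphic to a direct sum $E_{p_1}\oplus\cdots\oplus E_{p_s}$ of the ``cyclic'' algebras $E_{p}$. The next step is therefore to compute $M(E_p)$ for the single algebra $E_p$ and check its dimension: in $E_p$ we have $R_{e_i}$ sending $e_i \mapsto e_{i+1}$ (indices mod $p$) and killing the other basis vectors, so $R_{e_i} = e_{i+1,i}$ as a matrix unit, and $R_{e_i}R_{e_{i+1}} = e_{i+1,i}e_{i+2,i+1}\cdots$ chains around; one checks that the products of these generators give all matrix units $e_{i+k, i}$, so $M(E_p)$ is actually all of $M_p(\mathbb{C})$, which has dimension $p^2 > p$ for $p \geq 2$. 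Hence the constraint $\dim M(E) = n$ forces every cyclic block to have $p_i = 1$, i.e. $\sigma = \mathrm{id}$ and $E$ is isomorphic to the direct sum of $n$ copies of the one-dimensional idempotent algebra $E_1$. Its enveloping algebra is then generated by the commuting idempotents $R_{e_i} = e_{i,i}$, giving precisely $M_1$.

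Alternatively, and perhaps cleaner, once we know $A$ is (after scaling the basis $e_i' = A_i e_i$ as in Theorem \ref{thm35}) a permutation matrix, I would argue directly on $M(E)$: the relations are $R_{e_i}R_{e_j} = \delta_{j,\sigma(i)} R_{e_{\sigma(i)}}$ when $A$ is the permutation matrix of $\sigma$, wait — one must recompute using \eqref{eq41}, getting $R_{e_i}R_{e_j} = a_{i,j}\sum_k a_{j,k} e_{i,k}$, which for a permutation matrix is $\delta_{j,\sigma(i)} e_{i,\sigma(j)}$. If $\sigma$ has a cycle of length $\geq 2$ these products generate more than $n$ elements (the full matrix algebra on that cycle's coordinates), contradicting $\dim M(E) = n$; so $\sigma = \mathrm{id}$, $R_{e_i} = e_{i,i}$, and $R_{e_i}R_{e_j} = \delta_{i,j}R_{e_i}$, which is the multiplication table $x_i x_i = x_i$ of $M_1$. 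Finally I would confirm $\dim M_1 = n$ and that it is indeed associative, closing the loop with the hypothesis.

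I expect the main obstacle to be the bookkeeping in the first paragraph — rigorously extracting ``each row of $A$ has exactly one nonzero entry'' from $\dim M(E) = n$ and $\mathrm{rank}\,A = n$ — because one has to be careful that the rank of the $i$-th auxiliary matrix really equals the support size of the $i$-th row of $A$, using that the nonzero rows of that matrix are scalar multiples of \emph{distinct} rows of the invertible matrix $A$. The cycle-elimination step is routine given Theorem \ref{thm39} and the easy computation $M(E_p) \cong M_p(\mathbb{C})$, but it is worth stating explicitly why a cycle of length $\geq 2$ is incompatible with the dimension hypothesis.
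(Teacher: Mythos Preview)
Your argument is correct but takes a longer route than the paper's. The paper observes directly that the $R_{e_i}$ are linearly independent (each supported on a distinct row, and nonzero since $\mathrm{rank}\,A=n$), hence a basis of $M(E)$; then $R_{e_i}R_{e_j}$, being supported on row $i$ alone, must be a scalar multiple $\beta_i R_{e_i}$, which gives $a_{i,j}a_{j,k}=\beta_i a_{i,k}$ for all $k$; linear independence of rows $i$ and $j$ of $A$ then forces $a_{i,j}=0$ for $i\neq j$, so $A$ is diagonal in one stroke and rescaling gives $M_1$. You instead first deduce that $A$ is monomial (from $\sum r_i = n$ and each $r_i\geq 1$, hence each $r_i=1$), and then invoke the Section~3 classification of permutation evolution algebras together with the computation $M(E_p)\cong M_p(\mathbb{C})$ to rule out cycles of length $\geq 2$. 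Your route has the virtue of tying the result back to Theorem~\ref{thm39}, but the paper's argument is shorter and entirely self-contained, never needing the permutation detour. One caution: the identity $\dim M(E)=\sum r_i$ that you quote is not literally true in general (for the two-dimensional algebra with $e_1e_1=e_2$, $e_2e_2=e_1$ one has $\sum r_i=2$ while $M(E)\cong M_2(\mathbb{C})$); but your argument only needs the inequality $\dim M(E)\geq\sum r_i$, which is clear since the pairwise products $R_{e_i}R_{e_j}$ already span a subspace of that dimension, so the logic goes through.
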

\begin{proof} Taking into account in the equalities
$$R_{e_i} R_{e_j}= a_{i,j}\sum\limits_{k=1}^n
a_{j,k}e_{i,k} = \beta_1R_{e_1}+\beta_2 R_{e_2} +\dots + \beta_n
R_{e_n}$$
that $\{R_{e_1},R_{e_2},\dots, R_{e_n}\}$ are linear independent (they are forms a basis of $M(E)$),
we conclude $\beta_k=0$ for $k\neq i.$

Therefore,
\begin{equation} \label{eq42} a_{i,j}\sum\limits_{k=1}^n
a_{j,k}e_{i,k} = \beta_iR_{e_i}=\beta_i\sum\limits_{k=1}^n a_{i,k}e_{i,k}.
\end{equation}
From \eqref{eq42} we derive

\begin{equation}\label{eq43} a_{i,j}a_{j,k} = \beta_i a_{i,k}, \quad 1 \leq k \leq n.
\end{equation}

The condition $rank A =n,$ implies $rank\left(\begin{matrix} a_{i,1}&
a_{i,2} & \dots& a_{i,n}
\\[1mm] a_{j,1}& a_{j,2} & \dots& a_{j,n} \end{matrix}
\right)=2$ for any $i\neq j.$

Using the arbitrariness $k$ in the equality \eqref{eq43} we obtain
$$a_{i,j} =0, \quad i\neq j \quad \Rightarrow  \quad a_{i,i}\neq 0.$$

Therefore, $$\ R_{e_i} R_{e_i} = a_{i,i}R_{e_i}, \quad 1 \leq i
\leq n.$$

By scaling the basis elements, we can suppose $a_{i,i} =1$ and the algebra $M_1$ is obtained.
\end{proof}

The list of $n$-dimensional algebras $M(E)$ for an evolution algebra, satisfying the condition $rank A=n-1$, is presented in the following theorem.
\begin{thm}  Let $rank A=n-1.$ Then associative enveloping algebra $M(E)$ is isomorphic to one of the following algebras:
$$M_2: \quad x_ix_i=x_i, \ 1 \leq i \leq n, \  x_1x_n=x_1, \ x_nx_1=x_n,$$
$$M_3: \quad x_ix_i=x_i, \ 1 \leq i \leq n-1, \ x_nx_1=x_n,$$
$$M_4: \quad x_ix_i = x_i, \ 1 \leq i \leq n-1, \quad  x_1 x_2=x_n, \quad x_1 x_n=x_n,
\quad x_n x_2=x_n.$$
\end{thm}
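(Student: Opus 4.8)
The plan is to mimic the structure of the proof of Proposition \ref{prop43}: start from the general multiplication rule \eqref{eq41}, use the hypothesis $\dim M(E)=n$ to conclude that $\{R_{e_1},\dots,R_{e_n}\}$ is a basis of $M(E)$, and then expand each product $R_{e_i}R_{e_j}$ in this basis. Writing $R_{e_i}R_{e_j}=\sum_k \beta^{(i,j)}_k R_{e_k}$ and matching matrix entries (each $R_{e_i}$ lives in the $i$-th block row of matrix units, so $R_{e_i}R_{e_j}$ lives in the $i$-th block row) forces the expansion to collapse: only the coefficient $\beta^{(i,j)}_i$ survives, giving
\begin{equation*}
a_{i,j}a_{j,k}=\beta^{(i,j)}_i\,a_{i,k},\qquad 1\le k\le n .
\end{equation*}
So the $j$-th row of $A$ is proportional to the $i$-th row whenever $a_{i,j}\ne 0$. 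This is the structural heart of the argument, identical in spirit to \eqref{eq43}.

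Next I would analyze the combinatorial consequences of $\operatorname{rank}A=n-1$. Exactly one nontrivial linear dependence exists among the rows; equivalently, consider the directed graph on $\{1,\dots,n\}$ with an edge $i\to j$ whenever $a_{i,j}\ne 0$ (note $i\to i$ forces row $i$ proportional to itself, always true, while $i\to j$ with $i\ne j$ forces rows $i,j$ parallel). Since any two parallel rows would drop the rank by one, and we drop it by exactly one, the rows split into at most one pair of parallel rows (with all others pairwise non-parallel), OR one zero row (with all others non-parallel and nonzero), since a zero row also reduces the rank by one. Each of these graph shapes must then be reconciled with $R_{e_i}R_{e_i}=a_{i,i}R_{e_i}$ (so $a_{i,i}\ne 0$ on any index $i$ whose row is nonzero and non-parallel to every other, by the same argument as in Proposition \ref{prop43}) and with the relation that parallel rows induce on the $R$'s. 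After a scaling of basis to normalize the surviving diagonal entries to $1$, the three admissible configurations yield precisely $M_2$ (two rows proportional, say rows $1$ and $n$ equal after scaling, producing $x_1x_n=x_1$, $x_nx_1=x_n$), $M_3$ (row $n$ a nonzero multiple of row $1$ but row $n$ having its ``diagonal'' in column $1$, i.e.\ $a_{n,1}\ne0,a_{n,n}=0$ — careful bookkeeping of which column the nonzero entry sits in distinguishes $M_2$ from $M_3$), and $M_4$ (the degenerate sub-case where one row is a genuine zero row, forcing $R_{e_n}$ to be expressible so that the products land on $R_{e_n}$ as in the stated table).

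The routine part is the final normalization: once the shape of $A$ is pinned down, apply a diagonal change of basis $e_i'=A_ie_i$ (exactly as in Theorem \ref{thm35} and Proposition \ref{prop36}) to clear the nonzero constants, and read off the multiplication table of $M(E)$ from \eqref{eq41}. I would also need to check that $M_2,M_3,M_4$ are genuinely non-isomorphic and each actually occurs — the latter by exhibiting one evolution algebra $E$ realizing each (e.g.\ a suitable $A$ with the prescribed rank-$(n-1)$ pattern), which is a short verification.

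The main obstacle I anticipate is the case analysis in the middle step: translating ``$\operatorname{rank}A=n-1$'' plus ``$a_{i,j}\ne0\Rightarrow$ row $j\parallel$ row $i$'' into an exhaustive, non-overlapping list of matrix patterns. One must be careful that the proportionality relation $a_{i,j}a_{j,k}=\beta a_{i,k}$ is not symmetric in $i,j$ (it is driven by $a_{i,j}$, not $a_{j,i}$), so the directed graph really is directed, and distinguishing $M_2$ from $M_3$ hinges on whether the proportional pair of rows have their dominant entries in the same column or in crossed columns. Getting that dichotomy exactly right, and not missing the zero-row case that produces $M_4$, is where the real work lies; everything else is a direct computation from \eqref{eq41}.
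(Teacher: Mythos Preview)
Your opening move contains a genuine error: from $\dim M(E)=n$ you conclude that $\{R_{e_1},\dots,R_{e_n}\}$ is a basis of $M(E)$. This does not follow. $M(E)$ is the associative algebra \emph{generated} by the $R_{e_i}$, so products $R_{e_i}R_{e_j}$ may contribute dimensions not in $\operatorname{span}\{R_{e_1},\dots,R_{e_n}\}$. Concretely, when one row of $A$ --- say the $n$-th --- is zero, one has $R_{e_n}=0$, the remaining $R_{e_1},\dots,R_{e_{n-1}}$ span only an $(n-1)$-dimensional space, and the missing $n$-th basis vector of $M(E)$ is a genuine product. In the paper this is exactly the bifurcation: Case~1 is $R_{e_n}\neq 0$ (your parallel-rows picture, leading to $M_2,M_3$, and here your proportionality relation $a_{i,j}a_{j,k}=\beta\,a_{i,k}$ is valid and the analysis is essentially right); Case~2 is $R_{e_n}=0$, and then one shows there is a unique off-diagonal $a_{i_0,j_0}\neq 0$ among the first $n-1$ rows, takes $x_n:=R_{e_{i_0}}R_{e_{j_0}}$ (an element of the $i_0$-th matrix row, \emph{not} a multiple of $R_{e_{i_0}}$), and reads off the table $M_4$ in the basis $R_{e_1},\dots,R_{e_{n-1}},x_n$.

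So your description of the $M_4$ case --- ``a zero row forcing $R_{e_n}$ to be expressible so that the products land on $R_{e_n}$'' --- is backwards: $R_{e_n}$ is zero, not the receptacle for products, and $x_n$ in the statement of $M_4$ is not any $R_{e_i}$ at all. Your proportionality equation cannot even be written down in this case, because $R_{e_i}R_{e_j}$ is simply not in $\operatorname{span}\{R_{e_k}\}$. The fix is to split at the outset on whether all $R_{e_i}$ are nonzero; your directed-graph heuristic then handles the first branch correctly, but the second branch needs a separate argument identifying the extra generator as a product and computing its interactions with the $R_{e_i}$ directly from \eqref{eq41}.
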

\begin{proof}
Without loss of generality, one
can assume $$A=\left(\begin{matrix} a_{1,1}& a_{1,2} & \dots&
a_{1,n}
\\[1mm] a_{2,1}& a_{2,2} & \dots& a_{2,n}\\[1mm]
\vdots&\vdots&\vdots&\vdots\\[1mm] a_{n-1,1}& a_{n-1,2} & \dots& a_{n-1,n}
\\[1mm] \sum\limits_{i=1}^{n-1}\alpha_ia_{i,1}& \sum\limits_{i=1}^{n-1}\alpha_ia_{i,2}
& \dots& \sum\limits_{i=1}^{n-1}\alpha_ia_{i,n} \end{matrix}
\right).$$

\textbf{Case 1.} Let
$R_{e_n}=\sum\limits_{k=1}^{n}\big(\sum\limits_{i=1}^{n-1}\alpha_ia_{i,k}
\big)e_{n,k} \neq 0.$ Then $\{R_{e_1}, R_{e_1}, \dots,
R_{e_{n-1}},R_{e_{n}} \}$ is a basis of $M(E).$

Similarly as in the proof of Proposition \ref{prop43}, from the condition
$$rank\left(\begin{matrix} a_{i,1}& a_{i,2} & \dots& a_{i,n}
\\[1mm] a_{j,1}& a_{j,2} & \dots& a_{j,n} \end{matrix}
\right)=2,$$ for any $1 \leq i,j \leq n-1, \ i\neq j,$ we deduce
$$a_{i,j} =0, \quad 1 \leq i,j\leq n-1, \ i\neq j.$$

Consequently,
$$R_{e_i} = a_{i,i}e_{i,i} + a_{i,n}e_{i,n}, \ 1 \leq i
\leq n-1$$ and $$ R_{e_n} = \sum\limits_{k=1}^{n-1}
\alpha_ka_{k,k}e_{n,k} + (\sum\limits_{s=1}^{n-1}
\alpha_sa_{s,n})e_{n,n}.$$

If $\alpha_ka_{k,k} \neq 0$ for some $k  \ (1\leq k \leq n-1)$, then
without loss of generality we can suppose $\alpha_1a_{1,1} \neq
0.$

Consider
$$R_{e_n} R_{e_1} =\alpha_1a_{1,1}^2e_{n,1} +
\alpha_1a_{1,1}a_{1,n}e_{n,n}.$$ Since the product $R_{e_n} R_{e_i}$
should be expressed by $R_{e_n}$, we conclude $\alpha_ka_{k,k} =
\alpha_ka_{k,n} = 0, \ 2 \leq k \leq n-1,$ which yield
$$\alpha_k =0, \quad 2 \leq k \leq n-1.$$

If $\alpha_ka_{k,k} = 0$ for any $k \ (1\leq k \leq n-1),$ then from
the condition $R_{e_n} \neq 0$ we have the existence some $k_0$ such
that $\alpha_{k_0} \neq 0$. Hence, $a_{k_0,k_0} = 0.$
Since $rank A =n-1,$ then $a_{k,k} \neq 0$ for any $k
\neq k_0,$ consequently, $\alpha_k =0$ for any $k \neq k_0.$
Without loss of generality, one can assume $k_0=1.$

Thus, in Case 1 we obtain
$$\alpha_k =0, \ 2 \leq k \leq n-1.$$

Therefore, $$R_{e_n}  R_{e_i} = 0, \ 2 \leq i \leq n-1, \quad
\quad R_{e_1}  R_{e_n} =a_{1,n}R_{e_1}.$$

Consider
$$R_{e_i}  R_{e_n} = (a_{i,i}e_{i,i} +
a_{i,n}e_{i,n})  (\alpha_1a_{1,1}e_{n,1} +
\alpha_1a_{1,n}e_{n,n})= a_{i,n}(\alpha_1a_{1,1}e_{i,1} +
\alpha_1a_{i,n}e_{i,n}).$$

Since $R_{e_i} R_{e_n}$ should belong to $<R_{e_i}>$, we get
$a_{i,n} =0, \quad 2 \leq i \leq n-1.$

Thus, the table of multiplication of the algebra $M(E)$ has the
following form:
$$R_{e_i}  R_{e_i} = a_{i,i}R_{e_i} \quad 1 \leq i \leq n-1,$$
$$R_{e_1} R_{e_n} = a_{1,n}R_{e_1},  \quad R_{e_n}  R_{e_1} = a_{1,1}R_{e_n},
\quad R_{e_n}  R_{e_n} = a_{1,n}R_{e_n},$$ where $(a_{1,1},
a_{1,n}) \neq (0,0)$ and $a_{i,i} \neq 0,$ $2 \leq i \leq n.$

Considering the possible cases: $a_{1,1}a_{1,n} \neq 0$ and
$a_{1,1}a_{1,n}=0$, one finds the algebras $M_2, \ M_3.$

\textbf{Case 2.} Let $R_{e_n}= 0.$ Since a non-zero products of the form $R_{e_i} R_{e_j} = a_{i,j}\sum\limits_{k=1}^n a_{j,k}e_{i,k}$ are
linear independent, we obtain the existence of a unique  non-zero
coefficient $a_{i_0,j_0}$, $1 \leq i_0, j_0\leq n-1,$ $i_0\neq
j_0$. Without loss of generality, we can suppose $i_0=1, j_0=2$,
i.e. $a_{1,2} \neq 0$.

Therefore,
$$a_{i,j}=0, \quad 1 \leq i, j\leq n-1,\quad i\neq j, \quad (i,j) \neq (1,2).$$
$$a_{i,i}\neq 0, \quad 1 \leq i \leq n-1,$$

Putting $x_i = R_{e_i}, 1 \leq i \leq n-1$ and $x_n =
a_{2,2}e_{1,2} + a_{2,n}e_{1,n},$  we obtain the table of
multiplications of the algebra $M(E)$ in the form:
$$x_ix_i = a_{i,i}x_i, \ 1 \leq i \leq n-1, \quad  x_1 x_2=a_{1,2}x_n, \quad x_1 x_n=a_{1,1}x_n,
\quad x_n x_2=a_{2,2}x_n.$$

Taking the change $$x_1' = \frac {1} {a_{i,i}}x_i, \ 1 \leq i \leq
n-1, \quad x_n'=\frac {a_{1,2}}{a_{1,1}a_{2,2}} x_n$$ we get the algebra $M_4.$
\end{proof}

\

\section{Three-dimensional evolution algebras whose generators have infinite period.}

\

In this section we study three-dimensional evolution algebras whose generators have infinite period.

Let $E$ be a three-dimensional evolution algebra $E$ with
table of multiplications:
\begin{equation} \label{eq51} e_1 \cdot e_1 = a_1e_1+a_2e_2+a_3e_3,\quad e_2
\cdot e_2 = b_1e_1+b_2e_2+b_3e_3, \quad e_3 \cdot e_3 =
c_1e_1+c_2e_2+c_3e_3.
\end{equation}
Since the periods of generators are infinite, we get $a_1=b_2 =c_3=0.$

Consider $e_i^{[3]}$ and $e_i^{[4]}$ for $1\leq i \leq 3$
$$\begin{cases}e_1^{[3]} = (a_2^2b_1+a_3^2c_1)e_1+a_3^2c_2e_2 +
a_2^2b_3e_3,\\[1mm] e_2^{[3]} = b_3^2c_1e_1 +
(b_1^2a_2+b_3^2c_2)e_2+b_1^2a_3e_3,\\[1mm]
e_3^{[3]} = c_2^2b_1e_1+c_1^2a_2e_2 +
(c_1^2a_3+c_2^2b_3)e_3,\end{cases}$$
$$\begin{cases}e_1^{[4]} = (a_3^4c_2^2b_1 + a_2^4b_3^2c_1)e_1 + a_2^4b_3^2c_2e_2 +
a_3^4c_2^2b_3e_3,\\[1mm] e_2^{[4]} = b_1^4a_3^2c_1e_1 +
(b_3^4c_1^2a_2+b_1^4a_3^2c_2)e_2+b_3^4c_1^2a_3e_3,\\[1mm] e_3^{[4]} =
c_1^4a_2^2b_1e_1 + c_2^4b_1^2a_2e_2 +
(c_2^4b_1^2a_3+c_1^4a_2^2b_3)e_3.\end{cases}$$

Taking account the condition on periods of generators, we derive
\begin{equation} \label{eq52}
a_2^2b_1+a_3^2c_1=0, \quad b_1^2a_2+b_3^2c_2=0, \quad c_1^2a_3+c_2^2b_3=0,
\end{equation}
\begin{equation} \label{eq53}
a_3^4c_2^2b_1 + a_2^4b_3^2c_1=0, \quad b_3^4c_1^2a_2+b_1^4a_3^2c_2=0, \quad c_2^4b_1^2a_3+c_1^4a_2^2b_3=0.
\end{equation}

\begin{thm} Let $E$ be a three-dimensional evolution algebra with the table of multiplication \eqref{eq51}. Let any basis element has an infinite period and $a_2a_3b_1b_3c_1c_2=0.$ Then $E$ is isomorphic to the
following evolution algebra:
$$E^1: \ e_1 \cdot e_1 = a_2e_2+a_3e_3,\quad e_2
\cdot e_2 = b_3e_3.$$
\end{thm}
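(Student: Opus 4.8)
The plan is to prove that, under the stated hypotheses, after a suitable reordering of the evolution basis the structure matrix $A$ of $E$ (whose diagonal already vanishes because the periods are infinite) becomes strictly upper triangular; a strictly upper triangular $3\times 3$ evolution structure matrix is precisely the table of $E^1$, so this is all that is needed. It is convenient to encode $A$ by the directed graph $\Gamma$ on $\{1,2,3\}$ having an arrow $i\to j$ exactly when the coefficient of $e_j$ in $e_i\cdot e_i$ is non-zero; the six possible arrows correspond to $a_2\ (1\to 2)$, $a_3\ (1\to 3)$, $b_1\ (2\to 1)$, $b_3\ (2\to 3)$, $c_1\ (3\to 1)$, $c_2\ (3\to 2)$. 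Then "$A$ is permutation-similar to a strictly upper triangular matrix" is equivalent to "$\Gamma$ is acyclic", since an acyclic digraph on three vertices has a topological ordering, and relabelling the basis along it turns each $e_i\cdot e_i$ into a combination of strictly later basis vectors.

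First I would reduce to the case $a_2=0$. The cyclic relabelling $e_1\mapsto e_2\mapsto e_3\mapsto e_1$ and the transposition $e_2\leftrightarrow e_3$ are both isomorphisms within this family of algebras, and each preserves the conditions "all periods infinite" and $a_2a_3b_1b_3c_1c_2=0$. The cyclic relabelling permutes the structure constants inside the two triples $\{a_2,b_3,c_1\}$ and $\{a_3,b_1,c_2\}$, while the transposition interchanges $a_2\leftrightarrow a_3$, $b_1\leftrightarrow c_1$, $b_3\leftrightarrow c_2$, swapping the two triples. Composing these one can move any one of the six structure constants into the $a_2$ position; since by hypothesis one of them vanishes, we may assume $a_2=0$.

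Next, with $a_2=0$, equations \eqref{eq52} and \eqref{eq53} reduce to $a_3^2c_1=0$, $b_3^2c_2=0$, $c_1^2a_3+c_2^2b_3=0$, $a_3^4c_2^2b_1=0$, $b_1^4a_3^2c_2=0$, $c_2^4b_1^2a_3=0$; over a field this yields in particular $a_3c_1=0$, $b_3c_2=0$, and $a_3b_1c_2=0$. From these I would check directly that $\Gamma$ has no directed cycle: $a_2=0$ kills the $2$-cycle $1\leftrightarrow 2$ and the $3$-cycle $1\to 2\to 3\to 1$ (both need the arrow $1\to 2$); $a_3c_1=0$ kills $1\leftrightarrow 3$; $b_3c_2=0$ kills $2\leftrightarrow 3$; and $a_3b_1c_2=0$ kills the last remaining cycle $1\to 3\to 2\to 1$. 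These exhaust the directed cycles on three vertices (the zero diagonal of $A$ precludes loops), so $\Gamma$ is acyclic, and by the first paragraph $E\cong E^1$.

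I expect the real work to be organisational rather than computational: making the symmetry reduction in the second paragraph fully precise (verifying that the two basis relabellings are genuine isomorphisms of this family, that they preserve the hypotheses, and that together they act transitively on the six structure constants) and confirming that the cycle analysis is exhaustive. In particular one should note that the degree-$[4]$ identities \eqref{eq53} are genuinely needed: \eqref{eq52} alone does not rule out the cycle $1\to 3\to 2\to 1$, as is seen by taking $a_2=c_1=b_3=0$ and $a_3,b_1,c_2$ arbitrary non-zero. Finally, it is worth remarking that the argument uses only $e_i^{[3]}$ and $e_i^{[4]}$, so no higher plenary powers are required.
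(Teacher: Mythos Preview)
Your argument is correct and reaches the same conclusion as the paper, but the execution is genuinely different. The paper also begins with a symmetry reduction (it sets $b_1=0$ rather than your $a_2=0$, and simply asserts ``without loss of generality''), but then proceeds by an explicit case analysis on which of the remaining structure constants vanish, producing in each case a specific basis permutation onto $E^1$. You replace that case analysis with the single observation that the constraints $a_2=0$, $a_3c_1=0$, $b_3c_2=0$, $a_3b_1c_2=0$ kill every directed cycle in the associated graph, whence a topological ordering of the three vertices exists and gives the strictly upper triangular form at once. This buys you a cleaner and more uniform argument that avoids enumerating sub-cases; it also makes transparent why the degree-$[4]$ identities \eqref{eq53} are indispensable (they alone rule out the $3$-cycle $1\to 3\to 2\to 1$), a point the paper's case analysis obscures. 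Conversely, the paper's hands-on approach has the minor advantage of exhibiting the basis changes explicitly. Your justification of the symmetry reduction (checking that the $S_3$-action on the basis is transitive on the six off-diagonal positions) is more complete than the paper's bare ``without loss of generality''.
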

\begin{proof}
Let $a_2a_3b_1b_3c_1c_2=0,$ then, without loss of generality, we can
assume $b_1=0.$ The equalities \eqref{eq52} and \eqref{eq53} imply

\begin{equation} \label{eq54}
a_3c_1=0, \quad b_3c_2=0, \quad a_2b_3c_1=0.
\end{equation}

\textbf{Case 1.} Let $a_3=b_3=0.$ Then we obtain products
$$e_1 \cdot e_1 = a_2e_2,\quad e_3
\cdot e_3 = c_1e_1+c_2e_2.$$

Taking the change $e_1'=e_3, e_2'=e_1, e_3'=e_2,$ we derive that
this algebra is isomorphic to the algebra $E^1.$

\textbf{Case 2.} Let $a_3=0$ and $b_3\neq 0.$ Then from \eqref{eq54} we have $c_2=a_2c_1=0.$

If $a_2 \neq 0,$ then $c_1=0$ and we obtain evolution algebra with multiplications:
$$e_1
\cdot e_1 = a_2e_2, \ e_2 \cdot e_2 = b_3e_3.$$

If $a_2=0,$ then the table of multiplications of the algebra $E$ is as follows:
$$e_2 \cdot e_2 = b_3e_3, \ e_3
\cdot e_3 = c_1e_1.$$

Putting $e_1'=e_2, e_2'=e_3, e_3'=e_1,$ we derive that this algebra is isomorphic to the algebra
$E^1.$

\textbf{Case 3.} Let $a_3\neq 0.$ Then restrictions \eqref{eq54} imply $c_1=b_3c_2=0.$

If $b_3 \neq 0,$ then $c_2=0$ and the algebra $E^1$ is obtained.

If $b_3=0,$ then by taking basis transformation as follows:
$$e_1'=e_1, e_2'=e_3, e_3'=e_2$$ we get $E^1.$
\end{proof}

\begin{thm} Let $E$ be a three-dimensional evolution algebra with the table of multiplication \eqref{eq51} and let $a_2a_3b_1b_3c_1c_2\neq 0.$ Then period of each basis elements of the algebra $E$ is infinite if and only if
equations \eqref{eq52} hold true.
\end{thm}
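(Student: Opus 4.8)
The statement is an equivalence, and one direction is essentially already in hand. If every $e_i$ has infinite period, then $e_i$ cannot occur in $e_i^{[2]}$, which forces $a_1=b_2=c_3=0$, and then $e_i$ cannot occur in $e_i^{[3]}$, which—given the formulas for $e_i^{[3]}$ computed above—is exactly the system \eqref{eq52}. This uses nothing beyond the definition of period, in particular not the non-vanishing hypothesis, so the ``only if'' part is done and all the work is in the converse.

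For the converse, assume \eqref{eq52}, assume $a_2a_3b_1b_3c_1c_2\neq0$, and—as is the standing assumption of this section, and in any case necessary for the conclusion—that $a_1=b_2=c_3=0$. The plan is to prove by induction on $k\geq 2$ that
$$e_1^{[k]}=\lambda_k e_2+\mu_k e_3, \qquad \lambda_k\mu_k\neq0, \qquad \lambda_k^{2}b_1+\mu_k^{2}c_1=0 .$$
Granting this, $e_1$ never occurs in any $e_1^{[k]}$ with $k\geq2$, so $e_1$ has infinite period. The base case $k=2$ is $e_1^{[2]}=a_2e_2+a_3e_3$ with $a_2a_3\neq0$ and $a_2^{2}b_1+a_3^{2}c_1=0$ by the first equation of \eqref{eq52}. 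For the inductive step, using $e_i\cdot e_j=0$ for $i\neq j$, $e_2\cdot e_2=b_1e_1+b_3e_3$ and $e_3\cdot e_3=c_1e_1+c_2e_2$, one gets
$$e_1^{[k+1]}=\lambda_k^{2}\,(e_2\cdot e_2)+\mu_k^{2}\,(e_3\cdot e_3)=(\lambda_k^{2}b_1+\mu_k^{2}c_1)e_1+\mu_k^{2}c_2\,e_2+\lambda_k^{2}b_3\,e_3 ,$$
so the $e_1$-coefficient is $0$ by the inductive hypothesis, the new coefficients $\lambda_{k+1}=\mu_k^{2}c_2$ and $\mu_{k+1}=\lambda_k^{2}b_3$ are non-zero, and only $\lambda_{k+1}^{2}b_1+\mu_{k+1}^{2}c_1=0$ remains to be checked. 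Substituting $\lambda_k^{2}=-\mu_k^{2}c_1/b_1$ (allowed since $b_1\neq0$) into $\mu_k^{4}c_2^{2}b_1+\lambda_k^{4}b_3^{2}c_1$ shows that this amounts to the single polynomial identity $c_2^{2}b_1^{3}+c_1^{3}b_3^{2}=0$.

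Thus the whole argument hinges on deducing $c_2^{2}b_1^{3}+c_1^{3}b_3^{2}=0$ from \eqref{eq52} under the non-vanishing hypothesis, and I expect this to be the only genuine obstacle; everything else is bookkeeping. I would solve the first two equations of \eqref{eq52} for $c_1=-a_2^{2}b_1/a_3^{2}$ and $c_2=-a_2b_1^{2}/b_3^{2}$, substitute these into the third equation of \eqref{eq52} to obtain the relation $a_2^{2}b_3^{3}+a_3^{3}b_1^{2}=0$, and then observe that after the same substitutions $c_2^{2}b_1^{3}+c_1^{3}b_3^{2}$ becomes a non-zero multiple of $\bigl(a_3^{3}b_1^{2}-a_2^{2}b_3^{3}\bigr)\bigl(a_3^{3}b_1^{2}+a_2^{2}b_3^{3}\bigr)$, whose second factor now vanishes. (Incidentally this also recovers \eqref{eq53}, consistently with the discussion preceding the theorem.) Finally, the three generators enter the multiplication table and the system \eqref{eq52} symmetrically: relabeling the evolution basis cyclically, $f_1=e_2,\ f_2=e_3,\ f_3=e_1$, presents $E$ again in the form \eqref{eq51} with the triples $(a_2,a_3),(b_1,b_3),(c_1,c_2)$ cyclically rotated, and that rotation carries \eqref{eq52} and the condition $a_2a_3b_1b_3c_1c_2\neq0$ to themselves. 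Hence the statement proved for $e_1$ applies to $f_1=e_2$, and after one more rotation to $e_3$, so all three generators have infinite period, which completes the proof.
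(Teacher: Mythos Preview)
Your proof is correct and follows essentially the same inductive strategy as the paper: both reduce the problem to the algebraic identity $b_1^{3}c_2^{2}+b_3^{2}c_1^{3}=0$ (and its cyclic analogues) deduced from \eqref{eq52}, and then push through the same induction showing that the $e_i$-coefficient of $e_i^{[k]}$ vanishes for all $k$. The only cosmetic differences are that the paper derives that identity more directly by solving \eqref{eq52} for $a_2,a_3$ and substituting into the first equation, and treats the three generators in parallel rather than invoking the cyclic symmetry you use.
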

\begin{proof} It is sufficient to proof the part \emph{Only if.} From equalities \eqref{eq52}
we get $$a_2 = -\frac {b_3^2c_2} {b_1^2}, \quad a_3 = -\frac
{c_2^2b_3} {c_1^2}.$$

Putting this restrictions to the equality
$a_2^2b_1+a_3^2c_1=0$ we obtain
$$0=a_2^2b_1+a_3^2c_1 = \frac{b_3^2c_2^2
(b_3^2c_1^3+b_1^3c_2^2)} {b_1^3c_1^3},$$ which implies
\begin{equation}\label{eq551}b_3^2c_1^3+b_1^3c_2^2=0. \end{equation}

Similarly, we derive

\begin{equation}\label{eq55}b_1 = -\frac{a_3^2c_1} {a_2^2}, \quad b_3 = -\frac{c_1^2a_3}
{c_2^2}  \quad  \Rightarrow \quad a_3^2c_2^3+a_2^3c_1^2=0,
\end{equation}

\begin{equation}\label{eq56}c_1= -\frac{a_2^2b_1} {a_3^2}, \quad c_2=
-\frac{b_1^2a_2}{b_3^2} \quad \Rightarrow \quad
a_2^2b_3^3+a_3^3b_1^2=0. \end{equation}

Applying induction we will prove the following:
\begin{equation}\label{eq57}e_1^{[k]} = A_{k,2}e_2+ A_{k,3}e_3, \quad e_2^{[k]} = B_{k,1}e_1+ B_{k,3}e_3,  \quad
e_3^{[k]} = C_{k,1}e_1+ C_{k,2}e_2, \  k \geq 3 \end{equation}
with recurrence expressions
\begin{equation}\label{eq58} \begin{array}{lll}A_{k,2} = A^2_{k-1,3}c_2, & A_{k,3} = A^2_{k-1,2}b_3,
& A_{k-1,2}^2b_1+A_{k-1,3}^2c_1=0,\\[1mm]
B_{k,1} = B^2_{k-1,3}c_1, & B_{k,3} = B^2_{k-1,1}a_3, &
B_{k-1,1}^2a_2+B_{k-1,3}^2c_2=0,\\[1mm]
C_{k,1} = C^2_{k-1,2}b_1, & C_{k,2} = C^2_{k-1,1}a_2, &
C_{k-1,1}^2a_3+C_{k-1,3}^2b_3=0,
\end{array}
\end{equation}
where $A_{2,2} = a_2, \ A_{2,3} = a_3, \ B_{2,1} =b_1, \ B_{2,3}
=b_3, \ C_{2,1}=c_1, \ C_{2,2} =c_2.$

In fact, the correctness of expressions \eqref{eq57} is equivalent
to that each basis element of evolution algebra $E$ has infinite
period.

From decompositions of $e_i^{[3]}, \ 1\leq i \leq 3$ it is easy to
see the correctness of \eqref{eq58} for $k=3,$ i.e. $$A_{3,2} =
a_3^2c_2 = A^2_{2,3}c_2 , \quad A_{3,3} = a_2^2b_3 = A^2_{2,2}b_3,
\quad A_{2,2}^2b_1+A_{2,3}^2c_1=a_2^2b_1+a_3^2c_1=0,$$
$$B_{3,1} = b_3^2c_1= B^2_{2,3}c_1, \quad B_{3,3} = b_1^2a_3 = B^2_{2,1}a_3,
\quad B_{2,1}^2a_2+B_{2,3}^2c_2=b_1^2a_2+b_3^2c_2=0,$$
$$C_{3,1} =c_2^2b_1= C^2_{2,2}b_1, \quad C_{3,2} = c_1^2a_2= C^2_{2,1}a_2, \quad C_{2,1}^2a_3+C_{2,2}^2b_3=c_1^2a_3+c_2^2b_3=0.$$

Suppose that \eqref{eq57} and \eqref{eq58} are true for $k.$ We
will prove it for $k+1.$

The chain of equalities
$$e_1^{[k+1]} = e_1^{[k]} \cdot e_1^{[k]} =
(A_{k,2}e_2 + A_{k,3}e_3)\cdot (A_{k,2}e_2 +
A_{k,3}e_3)=(A_{k,2}^2b_1+ A_{k,3}^2c_1)e_1 +A_{k,3}^2c_2e_2 +
A_{k,2}^2b_3e_3$$ deduces
$$A_{k+1,2} = A_{k,3}^2c_2, \quad A_{k+1,3} = A_{k,2}^2b_3.$$

Applying induction assumption, that is,
$$A_{k,2} = A_{k-1,3}^2c_2, \quad A_{k,3} = A_{k-1,2}^2b_3 , \quad \quad A_{k-1,2}^2b_1+ A_{k-1,3}^2c_1=0,$$
we get
$$A_{k,2}^2b_1+ A_{k,3}^2c_1= A_{k-1,3}^4c_2^2b_1 + A_{k-1,2}^4b_3^2c_1= \frac {A_{k-1,2}^4b_1^2} {c_1^2}c_2^2b_1 +
 A_{k-1,2}^4b_3^2c_1 =  \frac {A_{k-1,2}^4} {c_1^2}(b_1^3c_2^2 + b_3^2c_1^3)$$

The equality \eqref{eq551} implies $A_{k,2}^2b_1+ A_{k,3}^2c_1=0.$

Similarly, one finds


$$B_{k,1}^2a_2 + B_{k,3}^2c_2=0, \quad \quad C_{k,1}^2a_3+C_{k,2}^2b_3=0.$$
\end{proof}

\section*{ Acknowledgements}
This works is supported by the Grant No.0251/GF3 of Education and Science Ministry of Republic of Kazakhstan and
the Grant (RGA) No:11-018 RG/Math/AS\_I--UNESCO FR: 3240262715.

\end{document}